\def\wht{\widehat}
\def\lam{\lambda}
\def\Lam{\Lambda}
\def\R{{\mathbb R}}
\def\C{{\mathbb C}}
\DeclareMathOperator{\diag}{diag}
\DeclareMathOperator{\rank}{rank}
\newtheorem{theorem}{\textbf{Theorem}}
\newtheorem{lemma}{\textbf{Lemma}}
\newtheorem{corollary}{\textbf{Corollary}}
\newtheorem{remark}{\rm{\textbf{Remark}}}
\newtheorem{example}{\rm{\textbf{Example}}}
\numberwithin{equation}{section}
\numberwithin{figure}{section}
\numberwithin{table}{section}
\newcommand{\onebytwo}[2]{
    \left[  \begin{array}{cc}
         #1 & #2
        \end{array} \right] }
\newcommand{\twobyone}[2]{
       \left[  \begin{array}{c}
         #1 \\
         #2
        \end{array} \right] }
\newcommand{\threebyone}[3]{
       \left[  \begin{array}{c}
         #1 \\
         #2 \\ 
         #3 
        \end{array} \right] }
\newcommand{\twobytwo}[4]{
       \left[ \begin{array}{cc}
        #1 & #2  \\
        #3 & #4
           \end{array} \right] }
\newcommand{\ignore}[1]{}
\newcommand{\mc}[1]{\mathcal{#1}}
\newcommand{\ldlt}{LDL$^{\rm T}$ } 
\newcommand{\MEDSKIP}{\medskip}
\newcolumntype{d}[1]{D{.}{.}{#1}}
\title{
On the shift-invert Lanczos method for  \\
the buckling eigenvalue problem 
}
\author{
Chao-Ping Lin
\thanks{Department of Mathematics, University of California, 
Davis, CA 95616, USA. (cplin@ucdavis.edu).}
\and
Huiqing Xie
\thanks{Department of Mathematics, East China University of Science and Technology,
Shanghai 200237, China. (hqxie@ecust.edu.cn).}
\and
Roger Grimes
\thanks{Livermore Software Technology Corporation, Livermore, CA 94551.}
\and
Zhaojun Bai
\thanks{Department of Computer Science, University of California, 
Davis, CA 95616, USA. (bai@cs.ucdavis.edu).}
}
\date{\today}
\begin{document}
\maketitle

\begin{abstract}
We consider the problem of extracting a few desired eigenpairs of 
the buckling eigenvalue problem $Kx = \lambda K_Gx$, 
where $K$ is symmetric positive semi-definite,
$K_G$ is symmetric indefinite, and 
the pencil $K - \lambda K_G$ is singular, namely, 
$K$ and $K_G$ share a non-trivial common nullspace. 
Moreover, in practical buckling analysis of structures, 
bases for the nullspace of $K$ and the common nullspace of $K$ 
and $K_G$ are available.  
There are two open issues for developing an industrial 
strength shift-invert Lanczos method: (1)
the shift-invert operator $(K - \sigma K_G)^{-1}$ 
does not exist or is extremely ill-conditioned, 
and (2) the use of the semi-inner product induced by $K$ 
drives the Lanczos vectors rapidly towards the nullspace
of $K$, which leads to a rapid growth of the Lanczos vectors in norms 
and cause permanent loss of information and the failure of the method.
In this paper, we address these two issues by proposing 
a generalized buckling spectral transformation of 
the singular pencil $K - \lam K_G$ and 
a regularization of the inner product via a low-rank updating 
of the semi-positive definiteness of $K$.
The efficacy of our approach is demonstrated by numerical examples, 
including one from industrial buckling analysis.  

\MEDSKIP
{\bf Keywords:} Eigenvalue problem, buckling analysis, Lanczos method, singular pencil

{\bf Mathematics Subject Classifications}: 65F15, 15A18
\end{abstract}


%


\section{Introduction}

We consider the buckling eigenvalue problem
\begin{align}  \label{eq:prob}
Kx = \lam K_Gx,
\end{align}
where $K$ and $K_G$ are $n\times n$ symmetric matrices, 
and $K$ is positive semi-definite and $K_G$ is indefinite. 
Furthermore, the pencil $K - \lam K_G$ is singular, 
i.e., the matrices $K$ and $K_G$ share a nontrivial 
common nullspace $\mc{Z}_c$. We are interested in 
(i) extracting a few nonzero finite eigenvalues around
a prescribed shift $\sigma\neq 0$ and
the associated eigenvectors $x$ perpendicular to 
the common nullspace $\mc{Z}_c$, 
and (ii) counting the number of eigenvalues of $K-\lambda K_G$ 
in a given interval $(\alpha, \beta)$.
As in practical buckling analysis of structures, 
we assume that a basis $Z\equiv [Z_N\ Z_C]$ of the nullspace of $K$ and 
a basis $Z_C$ of the common nullspace $\mc{Z}_c$ of 
$K$ and $K_G$ are available, and the pencil $K - \lam K_G$ is 
simultaneously diagonalizable.

The buckling eigenvalue problem \eqref{eq:prob} arises from 
the buckling analysis in structural engineering, 
where $K$ is referred to as the stiffness matrix and
$K_G$ is referred to as the geometric stiffness matrix.
The eigenvalue $\lam$ is used to determine the critical load
at which a structure may become unstable \cite[p.~72]{Komzsik2016},
and the eigenvector $x$ is the associated buckling shape.
The bases for the nullspace of $K$ and the common nullspace 
$\mc{Z}_c$ of $K$ and $K_G$ can be extracted
from the algebraic or geometric structure of 
the problem \cite{Farhat1998,Papa2001}.

The buckling eigenvalue problem \eqref{eq:prob} remains
an outstanding computational challenge in numerical linear algebra
\cite{Meerbergen2001,Stewart2009} and 
in industrial applications \cite{Grimes2016}.  
When the pencil $K - \lam K_G$ is regular and $K$ is positive definite,
a common practice for computing eigenpairs around a given shift 
$\sigma$ is  to convert \eqref{eq:prob} into the following ordinary 
eigenproblem
via a so-called {buckling} spectral transformation
\begin{align} \label{eq:bucklingTr}
(K- {\sigma} K_G)^{-1} K x = \frac{\lam}{{\lam} - {\sigma}} x,
\end{align}
see \cite{Ericsson1980,Nour-Omid1987,Grimes1994,Lehoucq1998}.
Since 
$(K- {\sigma} K_G)^{-1} K$ is symmetric with respect to $K$, 
the Lanczos method with $K$-inner product 
can be immediately used to solve the eigenproblem \eqref{eq:bucklingTr}. 
This approach is referred to as the shift-invert Lanczos method and
has been widely used, including in a number of industrial strength
eigensolvers, such as LS-DYNA  \cite{Grimes2016}. 

However, when $K$ is positive semi-definite and 
$K- {\lambda} K_G$ is singular, we have the following two issues: 
\begin{enumerate} 
\item 
Since the pencil $K-\lam K_G$ is {singular} or {near singular}, i.e., 
the matrices $K$ and $K_G$ share a non-trivial common nullspace $\mc{Z}_c$,
the shift-invert matrix $(K - \sigma K_G)^{-1}$ does not exist or 
is extremely ill-conditioned.  

\item Since the matrix $K$ is positive semi-definite,
the inner product induced by $K$ causes 
the Lanczos vectors driven rapidly toward the nullspace of $K$
\cite{Nour-Omid1987,Meerbergen1997,Meerbergen2001,Stewart2009}.
It results in the large norms of the Lanczos vectors,
which introduces large rounding errors. 
The accuracy of the computed solutions is degraded and even failed.  
\end{enumerate} 
These issues have been studied since the early development of
the shift-invert Lanczos method in the 1980s. 
Nour-Omid et al.  \cite{Nour-Omid1987} proposed a modified formulation of 
the Ritz vectors to refine the computed solutions.
Meerbergen \cite{Meerbergen2001} proposed to control the norms of 
the Lanczos vectors by applying implicit restart \cite{Sorensen1992}.
More recently, Stewart \cite{Stewart2009} gave a detailed analysis to 
show that the loss of information caused by the growth of the Lanczos vectors 
is permanent.

In this paper, we address the two issues by first
proposing a generalized buckling spectral transformation of 
the singular pencil $K-\lam K_G$, and 
a reguarlization of the inner product via a low-rank updating
of the positive semi-definite matrix $K$. 
Then a shift-invert Lanczos method 
for the buckling eigenvalue problem \eqref{eq:prob} is developed.
We will discuss two implementations of the matrix-vector product
for the computational kernel of 
the shift-invert Lanczos method, 
and propose two ways to count the number of 
eigenvalues in a given interval $(\alpha,\beta)$ for validation.

The rest of the paper is organized as follows. 
In  \S\ref{sec:theory}, we first present a canonical form 
of the pencil $K-\lam K_G$, and propose a generalized 
buckling spectral transformation, and 
a regularization of the inner product. 
In \S\ref{sec:SILan}, we discuss the implementation of 
the shift-invert Lanczos method with 
the generalized buckling spectral transformation 
and the regularized inner product. 
In \S\ref{sec:inertias}, we discuss two ways to 
count the number of eigenvalues in an interval.
Efficacy of the proposed approach is demonstrated in \S\ref{sec:example}.
Concluding remarks are given in \S\ref{sec:conclusion}.

Following the convention of matrix computations,
we use the upper case letters for matrices and lower case letters for vectors.
In particular, we use $I_n$ for the identity matrix of dimension $n$ with
$e_j$ being the $j$th column. 
If not specified, the dimensions of matrices and vectors conform to
the dimensions used in the context.
$\cdot^T$ is for transpose, $\cdot^{\dagger}$ for pseudo-inverse,
$\|\cdot\|_1$ for $1$-norm, and $\|\cdot\|_2$ and $\|\cdot\|_F$ 
for $2$-norm and Frobenius norm, respectively.
Also, we use $A^{-T}$ for the inverse of the matrix $A^{T}$.
The range and the nullspace of a matrix $A$ are denoted by 
$\mc{R}(A)$ and $\mc{N}(A)$, respectively.
The direct sum of two subspaces $\mc{S}_1$ and $\mc{S}_2$ is denoted by
$\mc{S}_1\oplus\mc{S}_2$.
The orthogonal complement to a subspace $\mc{S}$ is denoted by $\mc{S}^{\perp}$
and the orthogonal projection onto a subspace $\mc{S}$ is denoted by $\mc{P}_{\mc{S}}$.
$\nu_{+}(S)$, $\nu_{-}(S)$ and $\nu_{0}(S)$ 
denote the positive, negative and zero eigenvalues of 
a symmetric matrix $S$, respectively.
Other notations will be explained as used.


\section{Theory}  \label{sec:theory}


\subsection{Canonical form}

We start with a canonical form of the pencil $K -  \lambda K_G$.
For the compactness of presentation, 
we interchange the roles of $K$ and $K_G$ in \eqref{eq:prob} 
and consider the reversal of the pencil $K - {\lam}K_G$, i.e., 
$K_G - {\lam}^{\#}K$. 

\begin{theorem} \label{thm:can}
For the pencil $K_G-{\lam}^{\#} K$, 
there exists a non-singular matrix $W\in \R^{n\times n}$ such that
\begin{equation} \label{eq:can}
W^T K_G W =  \kbordermatrix{
& n_1       			& n_2			& n_3  	\\
n_1 & {\Lam}^{\#}_1     &          			&		\\ 
n_2& & {\Lam}^{\#}_2 			&		\\
n_3 &                   		&                     		& 0}
\quad \mbox{and} \quad
W^T K W =  \kbordermatrix{
		& n_1       	& n_2		& n_3   	\\
n_1          	& I_{n_1}  	&          		&		\\
n_2 		&             	& 0  			&		\\
n_3		&		& 			& 0},
\end{equation}
where ${\Lam}^{\#}_1$ and ${\Lam}^{\#}_2$ are diagonal matrices with real diagonal 
entries, and ${\Lam}^{\#}_2$ is non-singular.
Furthermore, by conformally partitioning $W = [W_1,W_2,W_3]$, 
we have 
\begin{align}  \label{eq:can_constraint}
W_3^TW_1 = 0\quad\mbox{and}\quad W_3^TW_2 = 0,
\end{align}
\end{theorem}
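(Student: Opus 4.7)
The plan is to start from the simultaneous diagonalizability hypothesis and then massage the resulting factorization into the block form \eqref{eq:can} together with the orthogonality constraint \eqref{eq:can_constraint}. By assumption, there is a nonsingular $\wtd W\in\R^{n\times n}$ for which both $\wtd W^T K\wtd W$ and $\wtd W^T K_G\wtd W$ are diagonal. Because $K$ is positive semi-definite, the diagonal entries of $\wtd W^T K\wtd W$ are nonnegative, so every positive entry can be rescaled to $1$ by multiplying the corresponding column of $\wtd W$ by the reciprocal square root of that entry.

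I would then sort the columns into three groups according to the pair of diagonal values: (i) $(\wtd W^T K\wtd W)_{ii}=1$; (ii) $(\wtd W^T K\wtd W)_{ii}=0$ and $(\wtd W^T K_G\wtd W)_{ii}\neq 0$; and (iii) both entries equal to $0$. A column in group (iii) is annihilated by both $K$ and $K_G$ and therefore lies in $\mc{Z}_c$; since the columns of $\wtd W$ form a basis of $\R^n$, exactly $n_3:=\dim\mc{Z}_c$ columns belong to group (iii). The analogous counting gives $n_2$ columns in group (ii) (lying in $\mc{N}(K)\setminus\mc{Z}_c$) and $n_1$ columns in group (i). A column permutation then produces a matrix $W$ satisfying \eqref{eq:can}, with $\Lam^{\#}_1$ and $\Lam^{\#}_2$ formed from the corresponding diagonal entries of $W^T K_G W$ and $\Lam^{\#}_2$ nonsingular by construction.

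To impose \eqref{eq:can_constraint} I would exploit the fact that $KW_3=K_GW_3=0$, so neither the $(3,3)$ blocks nor any off-diagonal block involving $W_3$ in $W^TKW$ or $W^TK_GW$ depends on the particular basis chosen inside $\mc{Z}_c$ or on the components of $W_1,W_2$ that already lie in $\mc{Z}_c$. I would therefore replace $W_3$ by an orthonormal basis of $\mc{Z}_c$ (obtained, for example, from a thin QR factorization of $Z_C$) and then redefine the other two blocks by the orthogonal projection $\wht W_j:=(I-W_3W_3^T)W_j$ for $j=1,2$. The identities $\wht W_j^T K\wht W_k=W_j^T K W_k$ and $\wht W_j^T K_G\wht W_k=W_j^T K_G W_k$ follow at once from $KW_3=K_GW_3=0$, so the canonical form survives the correction while $W_3^T\wht W_1=W_3^T\wht W_2=0$ is now built in.

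The step that requires real care, and which I would flag as the main bookkeeping obstacle, is verifying that $[\wht W_1,\wht W_2,W_3]$ remains nonsingular after the projection. Suppose $\wht W_1c_1+\wht W_2c_2+W_3c_3=0$; splitting into components in $\mc{Z}_c^{\perp}$ and $\mc{Z}_c$ gives $\wht W_1c_1+\wht W_2c_2=0$ and $W_3c_3=0$, the latter forcing $c_3=0$. The former rewrites as $W_1c_1+W_2c_2=W_3W_3^T(W_1c_1+W_2c_2)\in\mc{R}(W_3)$, and the nonsingularity of the original $W$ then yields $c_1=c_2=0$. The spectral content of the theorem is essentially contained in the simultaneous diagonalizability assumption; the substance of the argument lies in this bookkeeping that turns a generic congruence diagonalization into the structured form \eqref{eq:can}--\eqref{eq:can_constraint}.
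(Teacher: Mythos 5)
Your proof is correct but takes a genuinely different route from the paper's. The paper defers to Appendix A: Theorem~A.1 gives a constructive Fix--Heiberger reduction of a general symmetric semi-definite pencil to a canonical form that also covers the non-diagonalizable case (an extra $2n_0\times 2n_0$ block pair $(S,\Omega)$), and Corollary~A.1 then shows that simultaneous diagonalizability is equivalent to $n_0=0$, which collapses the canonical form to the three-block shape in \eqref{eq:can}. That route is more work but also more informative --- it yields the full canonical structure for an arbitrary semi-definite pencil, explicit formulas for $n_0,\ldots,n_3$ in terms of ranks and nullities, and a characterization of when the pencil is simultaneously diagonalizable. Your argument instead uses the simultaneous-diagonalizability hypothesis head-on: take a diagonalizing congruence, rescale so the $K$-diagonal becomes $0/1$, sort columns by the pair of diagonal values, and then enforce \eqref{eq:can_constraint} by projecting $W_1,W_2$ onto $\mc{Z}_c^{\perp}$ and verifying that both the block form and nonsingularity survive (which you do correctly via $KW_3=K_GW_3=0$ and the $\mc{Z}_c\oplus\mc{Z}_c^{\perp}$ splitting). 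This is shorter and more elementary, and it has the virtue of making the orthogonality constraint explicit --- the appendix's chain of congruences does not visibly produce it, so a post-processing step of exactly your kind is needed in any case. Two steps you elide but which are routine to fill in: concluding $K_G\wtd w_i=0$ from a zero diagonal entry requires the full diagonality of $\wtd W^T K_G\wtd W$ together with nonsingularity of $\wtd W$ (for $K$, semi-definiteness already suffices), and the count that group (iii) has exactly $\dim\mc{Z}_c$ columns needs the observation that any $z\in\mc{Z}_c$ expanded in the $\wtd W$-basis must have zero coefficients on the group (i) and (ii) columns.
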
 
\begin{proof} 
see Appendix \ref{appendix:can}. 
\end{proof}

\begin{remark}  \label{remark:can}
{\rm By the canonical form   \eqref{eq:can}, we immediately know that 
(i) the columns of $W_3$ span the common nullspace $\mc{Z}_c$ of $K$
and $K_G$, and the columns of $[W_1\ W_2]$ span the orthogonal 
complement to $\mc{Z}_c$, i.e., $\mc{Z}_c^{\perp}$;
(ii) the columns of $W_1$ are eigenvectors  
associated with real finite eigenvalues
$({\Lam}^{\#}_1,I_{n_1})$ of the pencil 
$K_G - {\lam}^{\#}K$ and are perpendicular to $\mc{Z}_c$;
(iii) The columns of $W_2$ are eigenvectors  associated with an infinite eigenvalue 
$({\Lam}^{\#}_2,0)$ of the pencil $K_G - {\lam}^{\#}K$
and are perpendicular to $\mc{Z}_c$;
(iv) For $x\in\mc{Z}_c$, $({\lam}^{\#}, x)$ is an eigenpair of the pencil 
$K_G - {\lam}^{\#}K$
for any ${\lam}^{\#}\in\C$.
} 
\end{remark}


\subsection{Generalized buckling spectral transformation}
Mathematically, the generalized   
buckling spectral transformation 
of the singular pencil $K - \lam K_G$ is to 
replace the inverse in \eqref{eq:bucklingTr}
by the pseudo-inverse and leads to the
ordinary eigenvalue problem
\begin{align}  \label{eq:defC}
Cx  = \mu x 
\quad \mbox{and} \quad 
C = (K - \sigma K_G)^{\dagger}K, 
\end{align}
where $(K - \sigma K_G)^{\dagger}$ is the 
pseudo-inverse of the singular matrix $K - \sigma K_G$
\cite[p.~290]{Golub2013}.
Note that the non-zero real shift $\sigma$ cannot be an eigenvalue of the pencil $K-\lam K_G$.

The following theorem provides the relationship 
of non-trivial eigenpairs
between the original buckling eigenvalue problem \eqref{eq:prob}
and the ordinary eigenvalue problem \eqref{eq:defC}. 
  
\begin{theorem}  \label{thm:eigC}
$(\lam, x)$ is an eigenpair of the pencil $K - \lam K_G$
with non-zero finite eigenvalue $\lam$ and $x\in\mc{Z}_c^{\perp}$
if and only if $(\mu, x)$ is an eigenpair of 
the matrix $C$ in \eqref{eq:defC} with $\mu\neq 0$ and $\mu\neq 1$ 
and $x\in\mc{Z}_c^{\perp}$, 
where $\mu=\frac{\lam}{\lam - \sigma}$ and $\sigma \neq 0$. 
\end{theorem}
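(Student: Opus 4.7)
The plan is to exploit the canonical form of Theorem~\ref{thm:can} to identify the null space and range of $K-\sigma K_G$, then use standard properties of the pseudo-inverse ($M^{\dagger}M$ and $MM^{\dagger}$ are orthogonal projectors onto $\mathcal{R}(M^T)$ and $\mathcal{R}(M)$ respectively) to pass between the generalized and the ordinary problem.

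First, I would establish the key auxiliary fact that
\[
\mathcal{N}(K - \sigma K_G) = \mathcal{Z}_c
\quad \text{and hence} \quad
\mathcal{R}(K - \sigma K_G) = \mathcal{Z}_c^{\perp},
\]
using Theorem~\ref{thm:can} and the symmetry of $K - \sigma K_G$. The inclusion $\mathcal{Z}_c \subseteq \mathcal{N}(K - \sigma K_G)$ is immediate. For the reverse, write $v = Wy$ with $y = (y_1^T, y_2^T, y_3^T)^T$; then $(K-\sigma K_G)v = 0$ forces $(I_{n_1} - \sigma\Lambda_1^\#)y_1 = 0$ and $\sigma \Lambda_2^\# y_2 = 0$. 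Because $\sigma \neq 0$ is not an eigenvalue of $K - \lam K_G$ and $\Lambda_2^\#$ is nonsingular, both blocks are invertible, forcing $y_1 = y_2 = 0$, so $v = W_3 y_3 \in \mathcal{Z}_c$. The second aux identity I need is the invariance $K\mathcal{Z}_c^{\perp} \subseteq \mathcal{Z}_c^{\perp}$, which follows from $z^T(Kx) = (Kz)^T x = 0$ for any $z \in \mathcal{Z}_c$, $x \in \mathcal{Z}_c^{\perp}$.

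Next I would prove the forward direction. Starting from $Kx = \lam K_G x$ with $\lam$ nonzero and finite, a direct rearrangement yields $(K - \sigma K_G)x = \frac{\lam-\sigma}{\lam} Kx$, equivalently
\[
Kx = \mu (K - \sigma K_G)x, \qquad \mu = \tfrac{\lam}{\lam-\sigma}.
\]
Apply $(K-\sigma K_G)^{\dagger}$ from the left. Since $(K-\sigma K_G)^{\dagger}(K-\sigma K_G)$ is the orthogonal projector onto $\mathcal{R}(K-\sigma K_G) = \mathcal{Z}_c^{\perp}$ and $x \in \mathcal{Z}_c^{\perp}$ by hypothesis, that projector acts as the identity on $x$, giving $Cx = \mu x$. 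Finally, $\mu \neq 0$ because $\lam \neq 0$, and $\mu \neq 1$ because $\lam$ is finite.

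For the reverse direction, start from $(K-\sigma K_G)^{\dagger}Kx = \mu x$ and multiply by $(K-\sigma K_G)$. The product $(K-\sigma K_G)(K-\sigma K_G)^{\dagger}$ is the orthogonal projector onto $\mathcal{R}(K-\sigma K_G) = \mathcal{Z}_c^{\perp}$; by the invariance lemma $Kx \in \mathcal{Z}_c^{\perp}$, so the projector leaves $Kx$ fixed. This gives $Kx = \mu(K - \sigma K_G)x$, which rearranges to $Kx = \lam K_G x$ with $\lam = \mu\sigma/(\mu-1)$; the conditions $\mu\neq 0,1$ guarantee $\lam$ is finite and nonzero, and a one-line check confirms $\mu = \lam/(\lam-\sigma)$.

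The main obstacle is the first step — correctly pinning down $\mathcal{N}(K-\sigma K_G)$ and $\mathcal{R}(K-\sigma K_G)$ via the canonical form, and verifying the $K$-invariance of $\mathcal{Z}_c^{\perp}$; once these are in hand, both directions reduce to an algebraic rearrangement of $Kx = \lam K_G x$ combined with the standard projector identities for the Moore--Penrose pseudo-inverse.
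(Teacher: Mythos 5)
Your proof is correct, and it takes a genuinely different, more elementary route than the paper's. The paper proves Theorem~\ref{thm:eigC} by first deriving the full eigenvalue decomposition $CW = W\,\diag\bigl((I_{n_1}-\sigma\Lambda_1^{\#})^{-1},0,0\bigr)$ (Lemma~\ref{lemma:eigC}), then cataloguing the eigenpair correspondences between the reversal pencil $K_G-\lambda^{\#}K$ and $C$ (Lemma~\ref{lemma:specC}), and finally chaining those equivalences through the substitution $\lambda^{\#}=1/\lambda$. You bypass the reversal pencil and the decomposition of $C$ entirely: you identify $\mathcal{N}(K-\sigma K_G)=\mathcal{Z}_c$ and $\mathcal{R}(K-\sigma K_G)=\mathcal{Z}_c^{\perp}$ from the canonical form, note that $K\,\mathcal{Z}_c^{\perp}\subseteq\mathcal{Z}_c^{\perp}$ (indeed $\mathcal{R}(K)\subseteq\mathcal{Z}_c^{\perp}$ by symmetry of $K$ and $\mathcal{Z}_c\subseteq\mathcal{N}(K)$), and then use only the two Moore--Penrose projector identities to rearrange $Kx=\lambda K_Gx$ into $Cx=\mu x$ and back. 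Both directions reduce to the algebraic identity $Kx=\mu(K-\sigma K_G)x$, with the projector acting as the identity on the relevant vector in each case. Your approach is shorter and more self-contained for this theorem alone; the paper's route builds Lemma~\ref{lemma:eigC} as reusable machinery (it is invoked again in the proof of Theorem~\ref{thm:lan} to show $MC$ is symmetric), so the extra work amortizes. One small point of phrasing: $\mu\neq 1$ follows because, for finite $\lambda$, the equation $\lambda/(\lambda-\sigma)=1$ forces $\sigma=0$; so both the finiteness of $\lambda$ and $\sigma\neq 0$ are in play, which is worth stating explicitly.
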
 

Before proving Theorem~\ref{thm:eigC}, 
we use the canonical form \eqref{eq:can} 
to derive an eigenvalue decomposition of $C$ and prove 
the eigenvalue and eigenvector relations between
$K_G - \lambda^{\#} K$ and $C$.


\begin{lemma}  \label{lemma:eigC}
With the canonical form \eqref{eq:can} in Theorem \ref{thm:can},
an eigenvalue decomposition of the matrix $C$ defined in~\eqref{eq:defC} is given by
\begin{align}  \label{eq:eigC}
CW 
 = W \left[ \begin{array}{ccc}
	(I_{n_1} - \sigma {\Lam}^{\#}_1)^{-1} &  &  \\
	&  0  &  \\
	&  &  0
	\end{array} \right].  
\end{align}
\end{lemma}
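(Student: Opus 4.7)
The plan is to verify the claim column block by column block, using the canonical form to reduce the computation of $(K-\sigma K_G)^{\dagger}$ to a pseudo-inverse on a simple diagonal matrix.

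First, I would handle the two zero blocks on the right-hand side of \eqref{eq:eigC}. From $W^{T}KW=\diag(I_{n_1},0,0)$, we read off $W_2^{T}KW_2=0$ and $W_3^{T}KW_3=0$. Since $K$ is positive semi-definite, these quadratic forms vanishing force $KW_2=0$ and $KW_3=0$, and hence $CW_2=(K-\sigma K_G)^{\dagger}KW_2=0$ and $CW_3=0$. This matches the last two block columns of the target identity.

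Second, I would compute $CW_1$. Set $M=K-\sigma K_G$. From \eqref{eq:can},
\[
W^{T}MW=\diag(I_{n_1}-\sigma\Lam^{\#}_1,\,-\sigma\Lam^{\#}_2,\,0)=\diag(D_1,D_2,0),
\]
and $D_1,D_2$ are non-singular because $\sigma\neq 0$ is not an eigenvalue of the pencil. Since $W$ is non-singular, this congruence immediately gives $\mc{N}(M)=\ran(W_3)$, and therefore $\ran(M)=\mc{N}(M)^{\perp}=\ran(W_3)^{\perp}$. Now \eqref{eq:can_constraint} tells us $W_3^{T}W_1=0$ and $W_3^{T}W_2=0$, so $\ran([W_1\ W_2])\subseteq \ran(W_3)^{\perp}$; a dimension count makes this an equality. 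Thus $\ran(M)=\ran([W_1\ W_2])$, and in particular $W_1D_1^{-1}$ lies in $\mc{N}(M)^{\perp}$.

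Third, I would exhibit $W_1D_1^{-1}$ as the unique element of $\mc{N}(M)^{\perp}$ mapped by $M$ to $KW_1$. Multiplying the block columns of $W^{T}MW$ and $W^{T}KW$ by $W^{-T}$ on the left yields
\[
MW_1 = W^{-T}\threebyone{D_1}{0}{0},\qquad KW_1 = W^{-T}\threebyone{I_{n_1}}{0}{0},
\]
so $M(W_1D_1^{-1})=KW_1$. Combined with $W_1D_1^{-1}\in\mc{N}(M)^{\perp}$, the defining property of the pseudo-inverse gives $M^{\dagger}KW_1=W_1D_1^{-1}=W_1(I_{n_1}-\sigma\Lam^{\#}_1)^{-1}$, which is exactly the first block column of \eqref{eq:eigC}. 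Assembling the three block columns finishes the proof.

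The main delicate point is confirming that $W_1D_1^{-1}$ lies in $\mc{N}(M)^{\perp}$; everything else is a direct substitution. This step is where \eqref{eq:can_constraint} is essential, because without the orthogonality $W_3^{T}W_1=0$ (which is not automatic from the diagonal congruence alone) the candidate would satisfy $Mx=KW_1$ but fail the range condition characterizing the pseudo-inverse.
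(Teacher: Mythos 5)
Your proof is correct and follows essentially the same route as the paper: both rest on the canonical form \eqref{eq:can}, the orthogonality constraint \eqref{eq:can_constraint}, and the Moore--Penrose properties of $(K-\sigma K_G)^{\dagger}$. The only organizational difference is that the paper left-multiplies the full block relation $KW=(K-\sigma K_G)W\,\diag\bigl((I_{n_1}-\sigma\Lam^{\#}_1)^{-1},0,0\bigr)$ by $(K-\sigma K_G)^{\dagger}$ and invokes the projection identity $(K-\sigma K_G)^{\dagger}(K-\sigma K_G)=\mc{P}_{\mc{R}(K-\sigma K_G)}$ all at once, whereas you dispatch the $W_2$ and $W_3$ block columns directly from the semi-definiteness of $K$ and use the unique-preimage-in-$\mc{N}(K-\sigma K_G)^{\perp}$ characterization of the pseudo-inverse only for $W_1$.
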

\begin{proof}
Recall that, since the matrix $K - \sigma K_G$ is symmetric,
\begin{align}  \label{eq:geometry1}
\mc{R}(K - \sigma K_G) = \mc{N}(K - \sigma K_G)^{\perp} = \mc{Z}_c^{\perp}.
\end{align}
In addition, by the condition \eqref{eq:can_constraint} in the 
canonical form~\eqref{eq:can}, we have  
\begin{align}  \label{eq:geometry2}
\mc{R}(W_1)\oplus\mc{R}(W_2) = \mc{R}(W_3)^{\perp} = \mc{Z}_c^{\perp}.
\end{align}
Therefore, from~\eqref{eq:geometry1} and \eqref{eq:geometry2},
\begin{align}  \label{eq:geometry}
\mc{R}(K - \sigma K_G) = \mc{R}(W_1)\oplus\mc{R}(W_2) 
= \mc{R}(W_3)^{\perp} = \mc{Z}_c^{\perp}.
\end{align}
Now note that, from the canonical form \eqref{eq:can}, 
\begin{align}  \label{eq:canKsKG}
W^TKW =   	 
	\left[ \begin{array}{ccc}
	I_{n_1} &  & \\
	&  0  &  \\
	&  &  0
	\end{array} \right]
\quad\mbox{and}\quad
W^T(K - \sigma K_G)W =   	 
	\left[ \begin{array}{ccc}
	I_{n_1} - \sigma {\Lam}^{\#}_1 &  & \\
	&  - \sigma{\Lam}^{\#}_2  &  \\
	&  &  0
	\end{array} \right].
\end{align}
Therefore we have
\begin{align}  \label{eq:canmatch}
W^TKW = 
 	\left[ \begin{array}{ccc}
	I_{n_1} &  & \\
	&  0  &  \\
	&  &  0
	\end{array} \right]
= W^T(K - \sigma K_G)W   	 
	\left[ \begin{array}{ccc}
	(I_{n_1} - \sigma {\Lam}^{\#}_1)^{-1} &  & \\
	&  0  &  \\
	&  &  0
	\end{array} \right].
\end{align}
Left multiplying~\eqref{eq:canmatch} by $W^{-T}$, it yields that  
\begin{align} \label{eq:KsKGmap}
KW = (K - \sigma K_G)W   	 
	\left[ \begin{array}{ccc}
	(I_{n_1} - \sigma {\Lam}^{\#}_1)^{-1} &  & \\
	&  0  &  \\
	&  &  0
	\end{array} \right].
\end{align}
From the Moore-Penrose conditions \cite[p.~290]{Golub2013},
\begin{align}  \label{eq:MoorePenrose}
(K - \sigma K_G)^{\dagger}(K - \sigma K_G) 
= \mc{P}_{\mc{R}((K - \sigma K_G)^T)}
= \mc{P}_{\mc{R}(K - \sigma K_G)}, 
\end{align}
namely $(K - \sigma K_G)^{\dagger}(K - \sigma K_G)$ 
is an orthogonal projection onto 
${\mc{R}((K - \sigma K_G)^T)} = {\mc{R}(K - \sigma K_G)}$.
Therefore, from~\eqref{eq:geometry} and \eqref{eq:MoorePenrose},
\begin{align}  \label{eq:projection}
(K - \sigma K_G)^{\dagger}(K - \sigma K_G)W 
= W	\left[ \begin{array}{ccc}
	I_{n_1}  &  & \\
	&  I_{n_2}  &  \\
	&  &  0
	\end{array} \right].
\end{align}
Left multiplying~\eqref{eq:KsKGmap} by $(K - \sigma K_G)^{\dagger}$
and using~\eqref{eq:projection}, we have the eigenvalue decomposition 
\eqref{eq:eigC} of $C$. 
%
\end{proof}

\begin{lemma}  \label{lemma:specC} 
The matrix $C$ defined in~\eqref{eq:defC} has the following properties:
\begin{itemize}
\item[(i)] 
$({\lam}^{\#}, x)$ is an eigenpair of $K_G - {\lam}^{\#}K$ 
with non-zero finite ${\lam}^{\#}$ and $x\in\mc{Z}_c^{\perp}$
if and only if 
$(\mu, x)$ is an eigenpair of $C$ 
with $\mu\neq 0$ and $\mu\neq 1$ and $x\in\mc{Z}_c^{\perp}$, 
where $\mu=\frac{1}{1-\sigma {\lam}^{\#}}$.

\item[(ii)]
$({\lam}^{\#}, x)$ is an eigenpair of $K_G - {\lam}^{\#}K$ 
with ${\lam}^{\#} = 0$ and $x\in\mc{Z}_c^{\perp}$
if and only if
$(\mu, x)$ is an eigenpair of $C$
with $\mu =1$ and $x\in\mc{Z}_c^{\perp}$.

\item[(iii)]
$({\lam}^{\#}, x)$ is an eigenpair of $K_G - {\lam}^{\#}K$ 
with $|{\lam}^{\#}| = \infty$ and $x\in\mc{Z}_c^{\perp}$
if and only if 
$(\mu, x)$ is an eigenpair of $C$
with $\mu = 0$ and $x\in\mc{Z}_c^{\perp}$.

\item[(iv)]
If $x\in\mc{Z}_c$, $Cx = 0$.
\end{itemize}
\end{lemma}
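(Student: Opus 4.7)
The plan is to express every relevant eigenvector in the basis $W = [W_1, W_2, W_3]$ provided by Theorem~\ref{thm:can}, and then to read off the corresponding eigenvalue of $C$ directly from the eigendecomposition \eqref{eq:eigC} of Lemma~\ref{lemma:eigC}. The starting observation is that since $W$ is nonsingular, any $x\in\R^n$ has a unique expansion $x = W_1 a_1 + W_2 a_2 + W_3 a_3$, and combining the relations $W_3^T W_1 = 0$ and $W_3^T W_2 = 0$ from \eqref{eq:can_constraint} with Remark~\ref{remark:can}(i), the constraint $x\in\mc{Z}_c^{\perp}$ is equivalent to $a_3 = 0$.

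For claims (i)--(iii), I would substitute this expansion into $K_G x = \lam^{\#} K x$ and use \eqref{eq:can} to reduce the pencil equation to the decoupled system $\Lam^{\#}_1 a_1 = \lam^{\#} a_1$ and $\Lam^{\#}_2 a_2 = 0$. Since $\Lam^{\#}_2$ is nonsingular, this forces $a_2 = 0$ in (i) and (ii), so $x = W_1 a_1$ with $a_1$ an eigenvector of the diagonal matrix $\Lam^{\#}_1$; for the infinite-eigenvalue case (iii) one reverses the roles to conclude $a_1 = 0$ and $a_2\ne 0$. Applying \eqref{eq:eigC} then yields $C x = \mu x$ with $\mu = 1/(1-\sigma \lam^{\#})$ for the finite nonzero case of (i), $\mu = 1$ when $\lam^{\#} = 0$ in (ii), and $\mu = 0$ in (iii). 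The converses come from running these identifications backwards: an eigenvector of $C$ with $\mu\ne 0$ must lie in $\mc{R}(W_1)$ by \eqref{eq:eigC}, while an eigenvector with $\mu = 0$ lying in $\mc{Z}_c^{\perp}$ is forced into $\mc{R}(W_2)$, which then corresponds to an infinite eigenvalue of the pencil.

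Claim (iv) is immediate: for $x\in\mc{Z}_c = \mc{R}(W_3)$, writing $x = W_3 a_3$ and applying \eqref{eq:eigC} yields $Cx = 0$ directly.

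The main point of care, rather than a genuine obstacle, is tracking exactly where the excluded values $\mu = 0$ and $\mu = 1$ come from. The value $\mu = 1$ appears among the diagonal entries of $(I_{n_1} - \sigma \Lam^{\#}_1)^{-1}$ in the $W_1$-block precisely when the corresponding diagonal entry of $\Lam^{\#}_1$ is $0$, which is exactly the situation of (ii); and $\mu = 0$ arises in \eqref{eq:eigC} from both the $W_2$-block (infinite eigenvalues of the pencil, case (iii)) and the $W_3$-block (the common nullspace, case (iv)), with the restriction $x\in\mc{Z}_c^{\perp}$ separating the two. The assumption that the shift $\sigma$ is not an eigenvalue of $K - \lam K_G$ ensures $I_{n_1} - \sigma \Lam^{\#}_1$ is invertible, so \eqref{eq:eigC} is well defined throughout.
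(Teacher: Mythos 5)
Your proposal is correct and follows essentially the same strategy as the paper's proof: compare the eigendecomposition~\eqref{eq:eigC} of $C$ obtained in Lemma~\ref{lemma:eigC} with the canonical form~\eqref{eq:can}, using the orthogonality conditions~\eqref{eq:can_constraint} to identify $\mc{Z}_c^{\perp}$ with $\mc{R}(W_1)\oplus\mc{R}(W_2)$. The one cosmetic difference is that you expand $x$ in the basis $W$ with explicit coordinates $(a_1,a_2,a_3)$ and argue on the coordinate vectors, which makes the ``only if'' directions and the separation of the $\mu=0$ cases (block $W_2$ versus block $W_3$) slightly more explicit than the paper's column-by-column phrasing, but the underlying argument is the same.
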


\begin{proof} 
The lemma can be proved by 
comparing the eigenvalue decomposition \eqref{eq:eigC} of $C$
with the canonical form \eqref{eq:can} of $K_G - {\lam}^{\#}K$. 
Specifically, for (i) and (ii), 
recall that each column of $W_1$ is an eigenvector associated with
a real, finite eigenvalue ${\lam}^{\#}$ of the pencil 
$K_G - {\lam}^{\#}K$
and the eigenvector is perpendicular to the common nullspace $\mc{Z}_c$.
From \eqref{eq:eigC}, 
each column of $W_1$ is now an eigenvector associated with 
a non-zero, finite eigenvalue $\mu = (1-\sigma{\lam}^{\#})^{-1}$ of 
the eigenproblem \eqref{eq:defC}.

To show (iii), recall that each column of $W_2$ is an eigenvector associated with
an infinite eigenvalue of the pencil $K_G - {\lam}^{\#}K$
and the eigenvector is perpendicular to the common nullspace $\mc{Z}_c$.
From \eqref{eq:eigC}, 
each column of $W_2$ is now an eigenvector associated with
zero eigenvalue of the eigenproblem \eqref{eq:defC}.

Finally, for (iv), 
the common nullspace $\mc{Z}_c$ is spanned by the columns of $W_3$
and, from \eqref{eq:eigC}, we know that $Cx = 0$ if $x\in\mc{Z}_c$.
\end{proof}

\noindent {\em Proof of Theorem~\ref{thm:eigC}.} 
Note that $(\lam, x)$ is an eigenpair of $K - \lam K_G$ 
with non-zero finite eigenvalue $\lam$ and $x\in\mc{Z}_c^{\perp}$
if and only if 
$({\lam}^{\#}, x)$ is an eigenpair of $K_G - {\lam}^{\#} K$ 
with non-zero finite eigenvalue 
${\lam}^{\#}=\lam^{-1}$ and $x\in\mc{Z}_c^{\perp}$.
Also, from Lemma \ref{lemma:eigC}(i), 
we know that $({\lam}^{\#}, x)$ is an eigenpair of 
$K_G - {\lam}^{\#} K$ 
with non-zero finite eigenvalue ${\lam}^{\#}$ and $x\in\mc{Z}_c^{\perp}$
if and only if
$(\mu, x)$ is an eigenpair of the eigenvalue problem $Cx = \mu x$
with $\mu = \frac{1}{1 - \sigma{\lam}^{\#}}$, $\mu\neq 0$ and $\mu\neq 1$, 
and $x\in\mc{Z}_c^{\perp}$.
Therefore,
$(\lam, x)$ is an eigenpair of the pencil $K - \lam K_G$ 
with non-zero finite eigenvalue $\lam$ and $x\in\mc{Z}_c^{\perp}$
if and only if 
$(\mu, x)$ is an eigenpair of the eigenvalue problem $Cx = \mu x$ 
with $\mu=\frac{\lam}{\lam - \sigma}$, $\mu\neq 0$ and $\mu\neq 1$, 
and $x\in\mc{Z}_c^{\perp}$.
\hfill $\Box$

By Theorem \ref{thm:eigC}, 
near the shift $\sigma$, 
the eigenpairs $(\lam,x)$ of $K-\lam K_G$ with
non-zero finite eigenvalues $\lam$ and $x\in\mc{Z}_c^{\perp}$
are transformed into eigenpairs $(\mu,x)$ of $C$ with
non-zero eigenvalues $\mu$, which typically are well-separated, 
and those away from the shift $\sigma$ are transformed into
clustered eigenpairs $(\mu,x)$ of $C$ near unity as shown in
Figure~\ref{fig:bucklingTr}. 
We note that the eigenpairs $(\mu,x)$ with $\mu=0$ or $\mu=1$ 
are not the ones of interest. 
The eigenpairs $(1,x)$ correspond 
to eigenpairs of $K-\lam K_G$ with infinite eigenvalues and
the eigenpairs $(0,x)$ correspond to eigenpairs of 
$K-\lam K_G$ with $x\in\mc{N}(K)$. 

\begin{figure}[t]
\centering
\includegraphics[width=0.48\textwidth,trim=10mm 10mm 10mm 10mm,clip=true]
{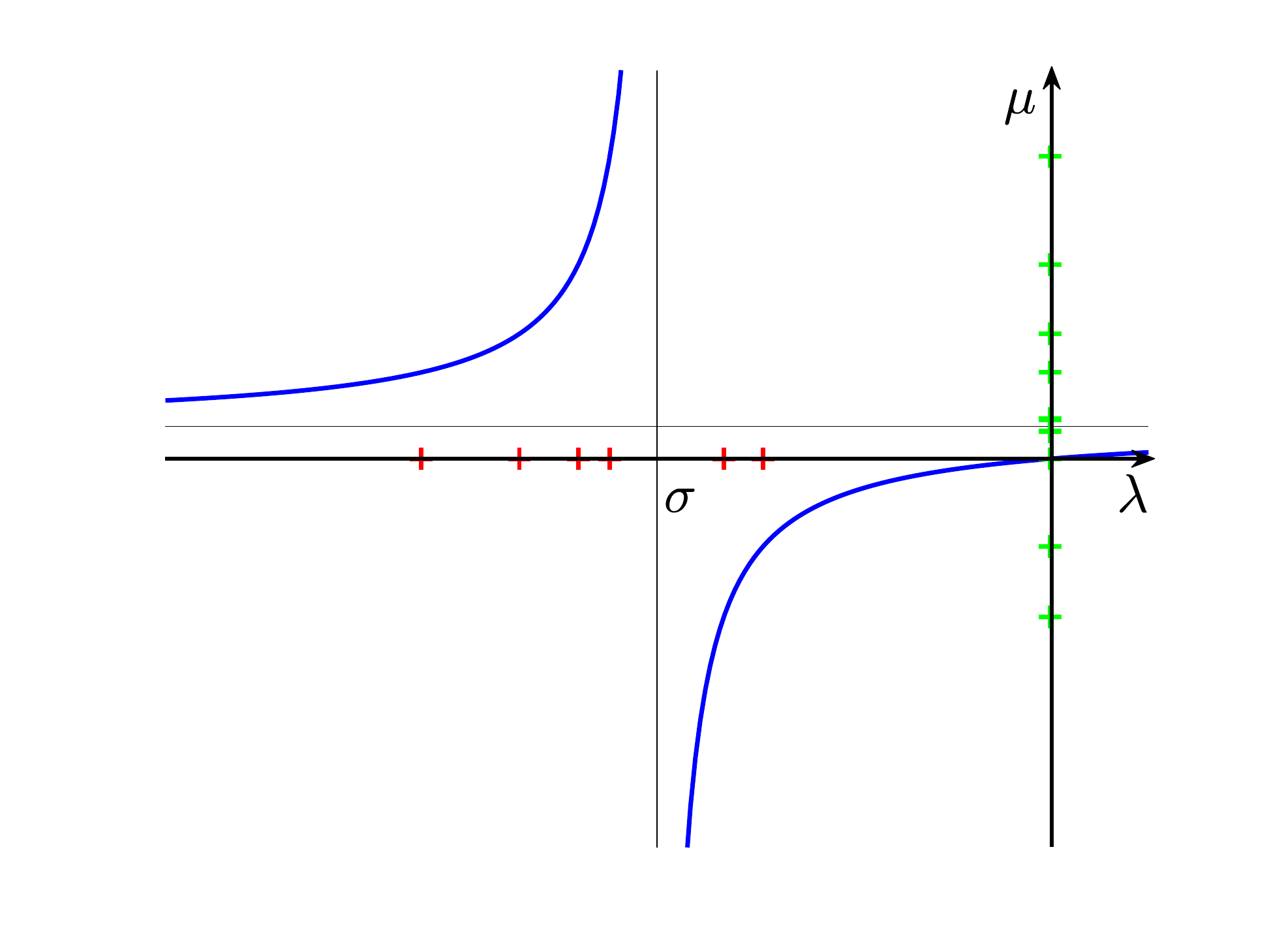}
\includegraphics[width=0.48\textwidth,trim=10mm 10mm 10mm 10mm,clip=true]
{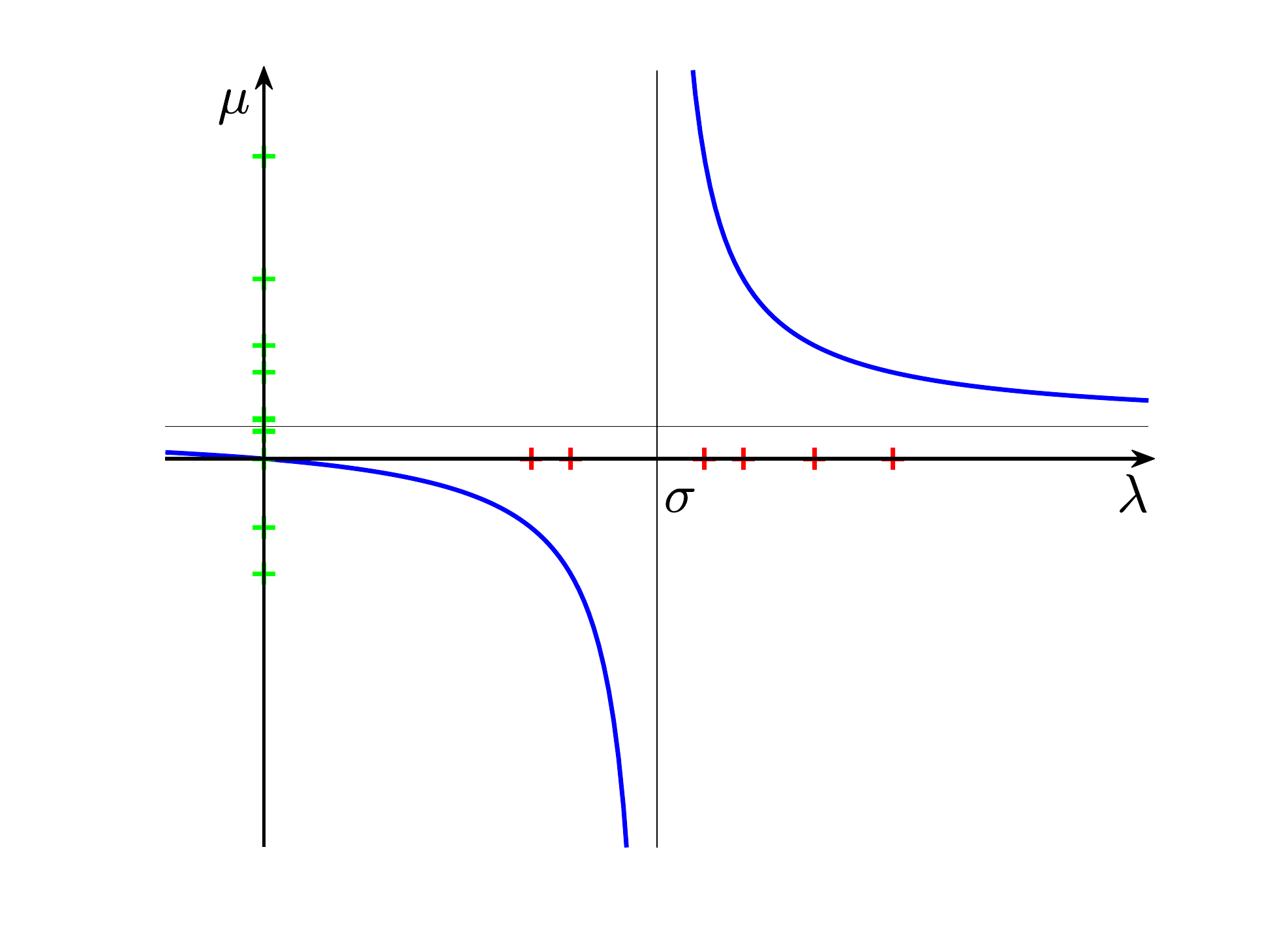}
\caption{Buckling spectral transfromation with 
$\sigma < 0$ (left) and $\sigma > 0$ (right).}
\label{fig:bucklingTr}
\end{figure}


\subsection{Regularization of the inner product}
In this subsection we introduce a positive definite matrix $M$ from 
a low-rank updating of $K$, and then show that the matrix $C$ 
in the generalized buckling spectral transformation~\eqref{eq:defC} 
is symmetric with respect to the inner product induced by $M$.

\begin{theorem}  \label{thm:lan}
Let $C$ be defined in \eqref{eq:defC}.
Let  $Z=[Z_N\ Z_C]$ span the nullspace $\mc{N}(K)$ 
and $Z_C$ span the common nullspace $\mc{Z}_c$ of $K$ and $K_G$. 
Define 
\begin{align}  \label{eq:defM}
M =  K + (K_GZ_N)H_N(K_GZ_N)^T + Z_CH_CZ_C^T,
\end{align}
where $H_N$ and $H_C$ are arbitrary positive definite matrices.
Then
\begin{itemize}
\item[$(i)$] 
the matrix $M$ is positive definite,

\item[$(ii)$] 
the matrix $C$ is symmetric with respect to 
the inner product induced by $M$.
\end{itemize}
\end{theorem}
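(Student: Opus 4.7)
My plan is to prove both parts by translating $M$ to the canonical basis $W$ from Theorem \ref{thm:can} and exploiting the block structure of $W^T K W$ and $W^T K_G W$ together with the orthogonality constraints $W_3^T W_1 = W_3^T W_2 = 0$.

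For (i), I would first write the quadratic form as
\begin{equation*}
x^T M x = x^T K x + \|H_N^{1/2}(K_G Z_N)^T x\|_2^2 + \|H_C^{1/2} Z_C^T x\|_2^2,
\end{equation*}
a sum of three nonnegative terms since $K$ is positive semi-definite and $H_N, H_C$ are positive definite. This already gives $M \succeq 0$. To upgrade to strict positive definiteness, suppose $x^T M x = 0$; then all three terms vanish, so $x \in \mc{N}(K)$, $(K_G Z_N)^T x = 0$, and $Z_C^T x = 0$. By Remark \ref{remark:can}, $\mc{N}(K) = \mc{R}([W_2\ W_3])$ and $\mc{Z}_c = \mc{R}(W_3)$, so the hypothesis that $[Z_N\ Z_C]$ spans $\mc{N}(K)$ forces $Z_N = W_2 P + W_3 Q$ for some square matrix $P$ which must be nonsingular (otherwise $[Z_N\ Z_C]$ would fail to span all of $\mc{N}(K)$). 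Writing $x = Z_N a + Z_C b$ and using $K_G W_3 = 0$, the constraint $(K_G Z_N)^T x = 0$ collapses to $P^T {\Lam}^{\#}_2 P a = 0$, which forces $a = 0$ since both ${\Lam}^{\#}_2$ and $P$ are nonsingular. Then $x = Z_C b$, and $Z_C^T Z_C b = 0$ gives $b = 0$ since $Z_C$ has full column rank.

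For (ii), the statement that $C$ is $M$-symmetric is equivalent to $MC$ being symmetric. From Lemma \ref{lemma:eigC} we have $CW = W\Lam$ with $\Lam = \diag((I_{n_1}-\sigma {\Lam}^{\#}_1)^{-1},\, 0,\, 0)$, so substituting $C = W\Lam W^{-1}$ reduces the symmetry condition to the commutation $(W^T M W)\Lam = \Lam(W^T M W)$. I would then compute $W^T M W$ block by block using the canonical form: the $K$-term contributes $\diag(I_{n_1}, 0, 0)$ by \eqref{eq:can}; using $Z_N = W_2 P + W_3 Q$ together with $K_G W_3 = 0$ gives $W^T K_G Z_N = W^T K_G W_2 P$, which by \eqref{eq:can} has only its middle $n_2$-block nonzero (namely ${\Lam}^{\#}_2 P$), so the $(K_G Z_N) H_N (K_G Z_N)^T$ term only populates the $(2,2)$ block of $W^T M W$; and writing $Z_C = W_3 R$ with $W_1^T W_3 = W_2^T W_3 = 0$ from \eqref{eq:can_constraint}, the $Z_C H_C Z_C^T$ term only populates the $(3,3)$ block. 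Hence $W^T M W$ is block-diagonal with top block exactly $I_{n_1}$, and therefore trivially commutes with $\Lam$.

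The main technical hurdle is the nonuniqueness of $Z_N$: the assumptions only require that $[Z_N\ Z_C]$ span $\mc{N}(K)$, so $Z_N$ need not be orthogonal to $Z_C$ nor aligned with $W_2$. The representation $Z_N = W_2 P + W_3 Q$ with $P$ invertible is the single ingredient that makes both arguments go through, and the small but essential point is to justify the invertibility of $P$ purely from the spanning hypothesis on $[Z_N\ Z_C]$.
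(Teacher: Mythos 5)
Your proof is correct and takes essentially the same route as the paper: both proofs hinge on decomposing $Z_N = W_2 R_{22} + W_3 R_{32}$ (your $P, Q$), $Z_C = W_3 R_{33}$ with $R_{22}, R_{33}$ non-singular, and on the resulting block-diagonal structure of $W^T M W = \diag(I_{n_1}, \wht{H}_N, \wht{H}_C)$. The only cosmetic difference is in part (i): you argue via the quadratic form and trace the vanishing of each term back to $x=0$, whereas the paper simply reads off positive definiteness from the block-diagonal congruence; for part (ii) the paper computes $W^T M C W$ directly and observes it is a diagonal matrix, while you phrase the identical computation as a commutation condition, which is equivalent. Your closing remark about justifying the invertibility of $P$ (the paper's $R_{22}$) is well taken — the paper asserts it without comment, but it follows immediately because $\twobytwo{R_{22}}{0}{R_{32}}{R_{33}}$ is a change-of-basis matrix between two bases of $\mc{N}(K)$ and hence invertible, so its block-triangular structure forces both diagonal blocks to be invertible.
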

\begin{proof}
By the canonical form \eqref{eq:can}, we have 
\begin{align*}
\mc{N}(K) = \mc{R}(W_2)\oplus\mc{R}(W_3) = \mc{R}(Z_N)\oplus\mc{R}(Z_C)
\quad\mbox{and}\quad
\mc{Z}_c = \mc{R}(W_3) = \mc{R}(Z_C),
\end{align*}
and 
\begin{align*}
\onebytwo{Z_N}{Z_C} = \onebytwo{W_2}{W_3}\twobytwo{R_{22}}{O}{R_{32}}{R_{33}}
\end{align*}
for some matrices $R_{22}\in\R^{n_2\times n_2}$, $R_{32}\in\R^{n_3\times n_2}$, 
$R_{33}\in\R^{n_3\times n_3}$, and $R_{22}$ and $R_{33}$ are non-singular.
Therefore, 
\begin{align*}
W^T K_GZ_N  = W^TK_G(W_2R_{22}+W_3R_{32})  
 = W^TK_GW_2R_{22}  
 = \threebyone{0}{{\Lam}^{\#}_2R_{22}}{0}.
\end{align*}
Since the basis $W$ satisfies the condition \eqref{eq:can_constraint},
\begin{align*}
W^TZ_C  = W^TW_3R_{33}  
 = \threebyone{0}{0}{(W_3^TW_3)R_{33}}.
\end{align*}
Therefore,
\begin{align} \label{eq:canM}
W^TMW  = W^T\big(K + (K_GZ_N)H_N(K_GZ_N)^T + Z_CH_CZ_C^T\big)W 
= \begin{bmatrix}
 I_{n_1} & & \\
 & \wht{H}_N &  \\
 & & \wht{H}_C
\end{bmatrix},
\end{align}
where 
$$
\wht{H}_N={\Lam}^{\#}_2R_{22}H_NR_{22}^T{\Lam}^{\#}_2
\quad\mbox{and}\quad
\wht{H}_C = (W_3^TW_3)R_{33}H_CR_{33}^T(W_3^TW_3).
$$
To prove that $M$ is positive definite, 
we show that both $\wht{H}_N$ and $\wht{H}_C$ are positive definite.
For the matrix $\wht{H}_N$, we note that
the matrix $H_N$ is positve definite 
and the matrix $R_{22}$ is non-singular.
Also, from Theorem \ref{thm:can}, 
the diagonal matrix ${\Lam}^{\#}_2$ is non-singular.
Therefore, the matrix $\wht{H}_N$ is positive definite. 
For the matrix $\wht{H}_C$, 
we note that the matrix $H_C$ is positive definite
and the matrix $R_{33}$ is non-singular.
Also, since the matrix $W_3$ is of full rank, 
the symmetric matrix $W_3^TW_3$ is non-singular.
Therefore, the matrix $\wht{H}_C$ is also positive definite. 
This proves $(i)$.

To prove $(ii)$, 
by the eigenvalue decomposition~\eqref{eq:eigC} of $C$ 
and~\eqref{eq:canM}, we have
\begin{align*}
W^TMCW = W^TM W W^{-1}CW = 
\left[ \begin{array}{ccc}
	(I_{n_1} - \sigma {\Lam}^{\#}_1)^{-1} &  & \\
	&  0  &  \\
	&  &  0
	\end{array} \right].
\end{align*}
Therefore, the matrix $MC$ is symmetric, which means that
the matrix $C$ is symmetric with respect to the inner product 
induced by $M$.
\end{proof}

\begin{remark}{\rm
We note that if 
the pencil $K - \lam K_G$ is regular, Theorem~\ref{thm:lan} is still 
applicable.  In this case, 
the matrix $C$ in \eqref{eq:bucklingTr} is symmetric with respect to
the inner product induced by 
$M = K + (K_GZ_N)H_N(K_GZ_N)^T$.
}\end{remark}


\section{Shift-invert Lanczos method}  \label{sec:SILan}

\subsection{Shift-invert Lanczos method}
By Theorem \ref{thm:eigC},
we have generalized the buckling spectral transformation to the singular pencil 
$K-\lam K_G$ and converted the buckling eigenproblem \eqref{eq:prob}
into an equivalent ordinary eigenvalue problem \eqref{eq:defC}.
From Theorem \ref{thm:lan}, we know that 
the matrix $C$ in \eqref{eq:defC} is symmetric with respect to
the inner product induced by the positive definite 
matrix $M$ in \eqref{eq:defM}. 
It naturally leads that 
to solve the buckling eigenvalue problem~\eqref{eq:prob}, 
we can use the Lanczos method on the matrix $C$ 
with the inner product induced by $M$.
This new strategy is also referred to as the shift-invert Lanczos method
and outlined in Algorithm~\ref{alg:LanFRO}.

The shift-invert Lanczos method, after $j$ steps, computes
a sequence of Lanczos vectors $\{v_1\ \ldots\ v_{j+1}\}$
and a symmetric tridiagonal matrix 
$T_j = \mbox{tridiag}( \beta_{i-1}, \alpha_{i}, \beta_{i})$
satisfying the governing equations
\begin{align} \label{eq:Lanczos} 
CV_j = V_jT_j + \beta_{j}v_{j+1}e_j^T
\quad\mbox{and}\quad 
V_{j+1}^TMV_{j+1} = I_{j+1},
\end{align}
where $V_{j+1} \equiv [v_1\ \ldots\ v_{j+1}]$.
Great cares must be taken to ensure that the equations in \eqref{eq:Lanczos} 
are satisfied \cite{Parlett1979,Ericsson1980,Simon1984,Nour-Omid1987,Grimes1994}
in the presence of finite-precision arithmetic. Several techniques have been 
developed and well-implemented \cite{Parlett1979,Simon1984,Grimes1994}.
For the rest of discussion, we will focus on the implementations of
the matrix-vector product $u=Cv$ for Line 6 of Algorithm~\ref{alg:LanFRO}.

\begin{algorithm}
\caption{Shift-invert Lanczos method for the buckling eigenvalue problem~\eqref{eq:prob}} 
\label{alg:LanFRO}
\begin{algorithmic}[1] 
\STATE{$r \leftarrow v$, where $v$ is the starting vector}
\STATE{$p \leftarrow Mr$, where $M =  K+(K_GZ_N)H_N(K_GZ_N)^T+Z_CH_CZ_C^T$ }
\STATE{$\beta_0 \leftarrow (p^Tr)^{1/2}$}
\FOR{$j = 1,2,\ldots$}
\STATE{$v_j \leftarrow r/\beta_{j-1}$}
\STATE{$r \leftarrow Cv_{j}$, where $C = (K-\sigma K_G)^{\dagger}K$ }
\STATE{$r \leftarrow r - \beta_{j-1}v_{j-1}$}
\STATE{$p \leftarrow Mr$}
\STATE{$\alpha_j \leftarrow v_{j}^Tp$}
\STATE{$r \leftarrow r - \alpha_jv_j$}
\STATE{perform re-orthogonalization if necessary}
\STATE{$p \leftarrow Mr$}
\STATE{$\beta_{j}\leftarrow (p^Tr)^{1/2}$}
\STATE{Compute the eigenvalue decomposition of $T_j$} 
\STATE{Check convergence}
\ENDFOR
\STATE{Compute approximate eigenvectors of the converged eigenpairs}
\end{algorithmic}
\end{algorithm}

\subsection{The matrix-vector product}  \label{sec:issues}

We first show that the matrix-vector product 
$u = Cv = (K - \sigma K_G)^{\dagger}Kv$
is connected
with the solution of a consistent singular linear system with constraint. 
Based on this connection, we present two ways for computing the vector $u$. 


\begin{theorem}  \label{thm:sinLin}
Given $v\in\R^n$, the vector
\begin{align}  \label{eq:MxV}
u =  (K - \sigma K_G)^{\dagger}Kv
\end{align}
is the unique solution of
the consistent singular linear system 
\begin{align}  \label{eq:sinLin}
(K - \sigma K_G)u = Kv
\end{align}
with the constraint 
\begin{align}  \label{eq:constraint}
Z_C^Tu = 0, 
\end{align}
where $Z_C$ is a basis of the common nullspace of $K$ and $K_G$. 
\end{theorem}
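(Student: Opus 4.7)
The plan is to establish three things in turn: consistency of the singular linear system \eqref{eq:sinLin}, the fact that $u = (K - \sigma K_G)^{\dagger}Kv$ satisfies both \eqref{eq:sinLin} and the constraint \eqref{eq:constraint}, and finally uniqueness of the solution under the constraint. The key geometric fact I will lean on throughout is equation \eqref{eq:geometry} established earlier, namely $\mathcal{R}(K - \sigma K_G) = \mathcal{Z}_c^{\perp}$, together with $\mathcal{N}(K - \sigma K_G) = \mathcal{Z}_c = \mathcal{R}(Z_C)$, which follow from the symmetry of $K - \sigma K_G$ and the canonical form in Theorem~\ref{thm:can}.

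First I would verify consistency. Since $Z_C$ spans the common nullspace $\mathcal{Z}_c$ of $K$ and $K_G$, we have $K Z_C = 0$, and therefore $Z_C^T K v = (K Z_C)^T v = 0$. This says $K v \perp \mathcal{Z}_c$, i.e., $Kv \in \mathcal{Z}_c^{\perp} = \mathcal{R}(K - \sigma K_G)$, so a solution to \eqref{eq:sinLin} exists.

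Next, for the vector $u = (K - \sigma K_G)^{\dagger} K v$ defined in \eqref{eq:MxV}, the Moore–Penrose identity $(K - \sigma K_G)(K - \sigma K_G)^{\dagger} = \mathcal{P}_{\mathcal{R}(K - \sigma K_G)}$ together with $Kv \in \mathcal{R}(K - \sigma K_G)$ gives $(K - \sigma K_G) u = Kv$, so $u$ satisfies \eqref{eq:sinLin}. The constraint \eqref{eq:constraint} follows because the pseudo-inverse produces the minimum-norm solution, which lies in $\mathcal{R}((K - \sigma K_G)^T) = \mathcal{R}(K - \sigma K_G) = \mathcal{Z}_c^{\perp}$; hence $Z_C^T u = 0$.

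For uniqueness, suppose $u'$ is any other vector satisfying \eqref{eq:sinLin} and \eqref{eq:constraint}. Then $(K - \sigma K_G)(u' - u) = 0$ places $u' - u \in \mathcal{N}(K - \sigma K_G) = \mathcal{R}(Z_C)$, so write $u' - u = Z_C c$ for some $c$. The constraint applied to both $u$ and $u'$ yields $Z_C^T Z_C c = 0$, and since $Z_C$ has full column rank the Gram matrix $Z_C^T Z_C$ is positive definite, forcing $c = 0$ and hence $u' = u$. I do not anticipate a significant obstacle here; the argument is essentially a bookkeeping exercise built on Theorem~\ref{thm:can} and the Moore–Penrose conditions already invoked in Lemma~\ref{lemma:eigC}.
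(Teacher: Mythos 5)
Your proposal is correct and follows essentially the same route as the paper: consistency via $Kv\in\mathcal{Z}_c^{\perp}=\mathcal{R}(K-\sigma K_G)$, verification of \eqref{eq:sinLin} and \eqref{eq:constraint} via the Moore--Penrose conditions, and uniqueness by observing that the difference of two solutions lies in $\mathcal{N}(K-\sigma K_G)=\mathcal{R}(Z_C)$ while being annihilated by $Z_C^T$. The only cosmetic differences are that you establish consistency directly from $KZ_C=0$ (the paper goes through $\mathcal{R}(K)\subset\mathcal{R}(K-\sigma K_G)$) and you invoke the minimum-norm/range property of the pseudo-inverse where the paper manipulates the identity $(K-\sigma K_G)^{\dagger}(K-\sigma K_G)=\mathcal{P}_{\mathcal{R}((K-\sigma K_G)^T)}$ explicitly; both are equivalent.
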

\begin{proof}
First note that since both $K$ and $K - \sigma K_G$ are symmetric, 
we have
\begin{align}  \label{eq:RNperp}
\mc{R}(K)=\mc{N}(K)^{\perp}
\quad\mbox{and}\quad 
\mc{R}(K - \sigma K_G)=\mc{N}(K - \sigma K_G)^{\perp}=\mc{Z}_c^{\perp}
\end{align}
and 
\begin{align}  \label{eq:Nsubset}
\mc{Z}_c = \mc{N}(K - \sigma K_G)\subset\mc{N}(K).
\end{align}
Therefore from~\eqref{eq:RNperp} and \eqref{eq:Nsubset},
\begin{align*} 
Kv\in\mc{R}(K)\subset\mc{R}(K - \sigma K_G), 
\end{align*}
which implies that the linear system~\eqref{eq:sinLin} is consistent.
From~\eqref{eq:MxV},
\begin{align}  \label{eq:sinLin2} 
(K - \sigma K_G)u 
=  (K - \sigma K_G)(K - \sigma K_G)^{\dagger}Kv  
= \mc{P}_{\mc{R}(K - \sigma K_G)}Kv 
= Kv,  
\end{align}
where $\mc{P}_{\mc{R}(K - \sigma K_G)}$ is an orthogonal projection 
onto ${\mc{R}(K - \sigma K_G)}$ 
(by the Moore-Penrose conditions \cite[p.~290]{Golub2013}). 
This means that $u$ is a solution of 
the consistent singular linear system \eqref{eq:sinLin}.

On the other hand, from~\eqref{eq:MxV} and \eqref{eq:sinLin2},
\begin{align} \label{eq:Pu1}
u = (K - \sigma K_G)^{\dagger} Kv
   = (K - \sigma K_G)^{\dagger}(K - \sigma K_G)u
   = \mc{P}_{\mc{R}((K - \sigma K_G)^T)}u
   = \mc{P}_{\mc{R}(K - \sigma K_G)}u.
\end{align}
Since $\mc{R}(K - \sigma K_G) = \mc{Z}_c^{\perp}$,
it implies that $u$ is perpendicular to the common nullspace $\mc{Z}_c$,
which is also the nullspace $\mc{N}(K - \sigma K_G)$. 

The uniqueness can be shown as follows.
Given two solutions $u_1$ and $u_2$ to~\eqref{eq:sinLin},
the difference $u_1-u_2$ would satisfy $(K-\sigma K_G)(u_1-u_2) = 0$,
which implies $u_1-u_2\in\mc{Z}_c$. However, 
since both solutions satisfy the constraint \eqref{eq:constraint},
$Z_C^T(u_1-u_2) = 0$. Therefore, $u_1-u_2 = 0$.
\end{proof}

We now present two methods to compute the matrix-vector product $u = Cv$.  

\paragraph{Method 1.}
By Theorem \ref{thm:sinLin}, a straightforward method 
is to solve the augmented linear system 
\begin{align}  \label{eq:augLin}
A_{\sigma} \twobyone{u}{v} = \twobyone{Kv}{0}
\quad \mbox{and} \quad
A_{\sigma} = \twobytwo{K - \sigma K_G}{Z_C}{Z_C^T}{0}.
\end{align}
The system \eqref{eq:augLin} is nonsingular, and 
$[u^T\ 0]^T$ is a unique solution. 
This is due to 
the fact that if we  consider the corresponding homogeneous system  
\begin{align}  \label{eq:augLin_foot}
A_{\sigma} y = \twobytwo{K - \sigma K_G}{Z_C}{Z_C^T}{0}
	\twobyone{y_1}{y_2} = \twobyone{0}{0}, 
\end{align}
by $\R^n=\mc{R}(K - \sigma K_G)\oplus\mc{R}(Z_C)$, 
the first block row of \eqref{eq:augLin_foot} 
leads to $(K-\sigma K_G)y_1=Z_Cy_2=0$.
Since $Z_C$ is of full rank, $y_2=0$.
Therefore we have $(K-\sigma K_G)y_1=0$ and $Z_C^Ty_1=0$,
it implies $y_1=0$. 

We note that the linear system of the form \eqref{eq:augLin} 
appears in various applications \cite{Benzi2005}. 
Recent advances include \cite{Ashcraft1998,Duff2017} on direct methods 
and \cite{Estrin2018} for iterative methods. 

\paragraph{Method 2.}
Note that the leading principal submatrix $K - \sigma K_G$ of $A_{\sigma}$ 
in \eqref{eq:augLin} is singular.
{The pivoting during the sparse \ldlt factorization of $A_{\sigma}$
would result in a permutation matrix which interchanges the rows 
in (1,1)-block of $A_{\sigma}$ with the basis $Z_C$ in (2,1)-block. 
When $Z_C$ is dense,
a significant number of fill-ins in the lower triangular matrix $L$ occurs
(see Example 2 in Section \ref{sec:example}).}
To circumvent this, we consider an alternative strategy as follows. 
First, we have the following theorem to extract a non-singular 
submatrix of $K - \sigma K_G$ by exploiting the basis $Z_C$.
\begin{theorem} \label{thm:rrp}
Let $Z_C\in\R^{n\times n_3}$ be a basis of $\mc{N}(K - \sigma K_G)$
and $P\in\mathbb{R}^{n\times n}$ be a permutation matrix
such that
$P^TZ_C\equiv\twobyone{Y_1}{Y_2}$,
and $Y_2\in\mathbb{R}^{n_3\times n_3}$ is non-singular.
Define 
\begin{align}  \label{eq:rrp}
S =  P^T(K - \sigma K_G)P 
\quad \mbox{and} \quad  
S = \kbordermatrix{& n-n_3 & n_3  \\
n-n_3 & S^{\sigma}_{11} & S_{12}  \\
n_3    & S_{12}^T & S_{22}}.
\end{align}
Then 
\begin{itemize} 
\item[(1)] the submatrix $S^{\sigma}_{11}\in\R^{(n-n_3)\times(n-n_3)}$ 
is non-singular,
\item[(2)]  
$\nu_{+}(S^{\sigma}_{11}) = \nu_{+}(K - \sigma K_G)$ 
and 
$\nu_{-}(S^{\sigma}_{11}) = \nu_{-}(K - \sigma K_G)$, 
where $\nu_{+}(X)$ and $\nu_{-}(X)$
denote the numbers of positive and negative eigenvalues of the symmetric matrix $X$, respectively.
\end{itemize} 
\end{theorem}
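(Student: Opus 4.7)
The plan is to prove both claims simultaneously by constructing an explicit congruence transformation that block-diagonalizes $S$ and then invoking Sylvester's law of inertia. The starting observation is that since the columns of $Z_C$ span $\mc{N}(K - \sigma K_G)$, we have $S (P^T Z_C) = 0$, i.e.,
\begin{align*}
S \twobyone{Y_1}{Y_2} = \twobyone{0}{0},
\quad \text{which gives} \quad
S^{\sigma}_{11} Y_1 + S_{12} Y_2 = 0
\quad \text{and} \quad
S_{12}^T Y_1 + S_{22} Y_2 = 0.
\end{align*}

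Next I would introduce the block upper-triangular matrix
\begin{align*}
Q = \twobytwo{I_{n-n_3}}{Y_1}{0}{Y_2},
\end{align*}
which is non-singular because $Y_2$ is. A direct calculation using the two annihilation identities above yields
\begin{align*}
Q^T S Q = \twobytwo{S^{\sigma}_{11}}{0}{0}{0},
\end{align*}
so the congruence by $Q$ stripping away $n_3$ zero rows/columns corresponding exactly to the known nullspace contribution of $Z_C$.

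With this block-diagonal form in hand, the rest is bookkeeping. Since permutation and $Q$-congruence both preserve inertia, Sylvester's law gives
\begin{align*}
\nu_+(K - \sigma K_G) = \nu_+(S^{\sigma}_{11}),
\quad
\nu_-(K - \sigma K_G) = \nu_-(S^{\sigma}_{11}),
\quad
\nu_0(K - \sigma K_G) = \nu_0(S^{\sigma}_{11}) + n_3.
\end{align*}
Because $\mc{N}(K - \sigma K_G) = \mc{Z}_c$ has dimension exactly $n_3$, we get $\nu_0(S^{\sigma}_{11}) = 0$, which simultaneously establishes the non-singularity of $S^{\sigma}_{11}$ in part (1) and the matching of positive and negative inertias in part (2).

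The only subtle step is the derivation of $Q^T S Q$: one must be careful to use \emph{both} block rows of $S(P^T Z_C) = 0$ (the first kills the $(1,2)$ block of $SQ$, while the second, combined with symmetry, kills the $(2,2)$ block of $Q^T S Q$). This is essentially a routine verification rather than a genuine obstacle, so I expect the main conceptual content of the proof to be just the choice of $Q$ and the appeal to Sylvester's law.
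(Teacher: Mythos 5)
Your proposal is correct and follows essentially the same path as the paper: the matrix $Q$ you introduce is exactly the matrix $E$ used in the paper's proof, the congruence $Q^T S Q = \operatorname{diag}(S^{\sigma}_{11}, 0)$ is the paper's equation~\eqref{eq:revealing}, and the appeal to Sylvester's law together with $\dim\mathcal{N}(K-\sigma K_G)=n_3$ is likewise identical.
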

\begin{proof}
Let
\begin{align*}  
E = \kbordermatrix{& n-n_3 & n_3  \\
n-n_3 & I_{n-n_3} & Y_1  \\
n_3    & 0 & Y_2}
\in\mathbb{R}^{n\times n}.
\end{align*}
The matrix $E$ is non-singular since $Y_2$ is non-singular.
By the congruence transformation, we have 
\begin{align}  \label{eq:revealing}
E^T S E = 
E^T P^T(K - \sigma K_G)P E = 
E^T \twobytwo{S^{\sigma}_{11}}{S_{12}}{S_{12}^T}{S_{22}} E
= \kbordermatrix{& n-n_3 & n_3  \\
n-n_3 & S^{\sigma}_{11} & 0  \\
n_3    & 0 & 0}. 
\end{align}
Sylvester's law \cite[p.~448]{Golub2013} tells that 
the matrices $K-\sigma K_G$ and $E^TSE$ 
have the same inertias. In particular, from~\eqref{eq:revealing}, 
we know that 
\begin{align*}
\nu_{+}(K-\sigma K_G)=\nu_{+}(S_{11}^{\sigma}),\quad
\nu_{-}(K-\sigma K_G)=\nu_{-}(S_{11}^{\sigma}),
\end{align*} 
and
\begin{align}  \label{eq:nu0}
\nu_{0}(K-\sigma K_G) = \nu_{0}(S_{11}^{\sigma}) + n_3
\end{align} But $\nu_{0}(K-\sigma K_G) = \dim(\mc{N}(K-\sigma K_G)) = n_3$.
Therefore, from~\eqref{eq:nu0}, 
$\nu_{0}(S_{11}^{\sigma}) = 0$ and $S_{11}^{\sigma}$ is non-singular.
\end{proof}

Theorem~\ref{thm:rrp}
was inspired by \cite[Theorem~2.2]{Arbenz2002}
where the authors consider solving
a consistent semi-definite linear systems $Ax=b$
from the electromagnetic applications \cite{Arbenz2001}.
The matrix $A$, generated from the finite element modeling,
is positive semi-definite and an explicit basis of
the nullspace of $A$ is available. This explicit basis of 
the nullspace is then used to identify a non-singular part 
of $A$ and a solution of the linear system can be computed from it.
Although in the buckling eigenvalue probem~\eqref{eq:prob},
the matrix $K-\sigma K_G$ is indefinite,
we found that the strategy developed in \cite{Arbenz2002}
can be generalized to the system \eqref{eq:sinLin} and 
\eqref{eq:constraint}.
By this strategy, the fill-ins of the lower triangular matrix $L$
can be significantly reduced,  see Example 2 in \S\ref{sec:example}.

By Theorem~\ref{thm:rrp}, an alternative method to solve \eqref{eq:sinLin} 
can be described in two steps:
\begin{enumerate}
\item Find a solution $u_p$ of the consistent singular linear system 
\eqref{eq:sinLin}.
\item Compute $u = \mc{P}_{\mc{R}(K - \sigma K_G)}u_p$
to satisfy the constraint  \eqref{eq:constraint},
where $\mc{P}_{\mc{R}(K - \sigma K_G)}$ is an orthogonal projection
onto $\mc{R}(K - \sigma K_G)$.
\end{enumerate}
Specifically, in Step 1, 
find the permutation matrix $P$ as described in Theorem \ref{thm:rrp},
and rewrite~\eqref{eq:sinLin} in the partitioned form \eqref{eq:rrp}:
\begin{align}  \label{eq:S}
\twobytwo{S^{\sigma}_{11}}{S_{12}}{S_{12}^T}{S_{22}}\twobyone{w_1}{w_2}
=\twobyone{c_1}{c_2}\in\mc{R}(S),
\end{align}
where
\begin{align*}
\twobyone{w_1}{w_2}\equiv P^Tu
\quad\mbox{and}\quad
\twobyone{c_1}{c_2}\equiv P^T Kv.
\end{align*}
Since $S^{\sigma}_{11}$ is non-singular, $S^{\sigma}_{11}$ is of full rank and
the leading $n-n_3$ columns of $S$ are linearly independent.
On the other hand, we know that $\rank(S)=\rank(K - \sigma K_G)=n-n_3$. 
Therefore, the leading $n-n_3$ columns of $S$ is a basis of $\mc{R}(S)$,
and there is a solution $w_p$ of~\eqref{eq:S} with $w_2 = 0$.
Direct substitution gives
\begin{align*}
w_p = \twobyone{(S^{\sigma}_{11})^{-1}c_1}{0},
\end{align*}
where the inverse $(S^{\sigma}_{11})^{-1}$ can be computed 
using the sparse \ldlt  factorization of $S^{\sigma}_{11}$
\cite{Ashcraft1998,Duff2017}. 
A solution $u_p$ of~\eqref{eq:sinLin} is then given by 
\begin{align*}
u_p = P\twobyone{(S^{\sigma}_{11})^{-1}c_1}{0}.
\end{align*}
In Step 2,  since $Z_C$ is a basis of $\mc{N}(K - \sigma K_G)$,
which is the orthogonal complement to $\mc{R}(K - \sigma K_G)$,
the vector $u$ can be computed by the projection
\begin{align*}
u = \mc{P}_{\mc{R}(K - \sigma K_G)}u_p
 = (I - Z_C (Z^T_C Z_C)^{-1} Z_C^T)u_p.
\end{align*}
If $Z_C$ is an orthonormal basis, then
\begin{align*}
u = \mc{P}_{\mc{R}(K - \sigma K_G)}u_p
 = (I - Z_C Z_C^T)u_p.
\end{align*}


\section{Counting eigenvalues}
\label{sec:inertias}
In this section,
as a validation scheme, we discuss ways to count the number of eigenvalues in 
a given interval. 
In the following, 
$\nu_{+}(A)$ and $\nu_{-}(A)$ 
denote the number of positive and negative eigenvalues of
a symmetric matrix $A$, respectively. 
$n(\alpha,\beta)$ and ${n}^{\#}(\alpha,\beta)$ 
denote the numbers of eigenvalues of 
the pencil $K-\lam K_G$ and the reversed pencil $K_G-{\lam}^{\#}K$ 
in an interval $(\alpha,\beta)$, respectively.

First, we consider the following lemma.
\begin{lemma}  \label{lemma:countEig}
Let  $Z=[Z_N\ Z_C]$ span the nullspace $\mc{N}(K)$
and $Z_C$ span the common nullspace $\mc{Z}_c$ of $K$ and $K_G$, then 
\begin{itemize}
\item[$(i)$]
for $\alpha < 0$, 
$n(\alpha,0) = \nu_{-}(K - \alpha K_G) - \nu_{-}(Z_N^TK_GZ_N)$, 

\item[$(ii)$]
for $\alpha > 0$,
$n(0,\alpha) = \nu_{-}(K - \alpha K_G) - \nu_{+}(Z_N^TK_GZ_N).$
\end{itemize}
In addition, the matrix $Z_N^TK_GZ_N$ is non-singular.
\end{lemma}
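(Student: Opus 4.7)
The plan is to reduce everything to inertia counts for the diagonal blocks appearing in the canonical form of Theorem~\ref{thm:can}. The congruence
\[
W^T(K-\alpha K_G)W \;=\; \diag\!\bigl(I_{n_1}-\alpha{\Lam}^{\#}_1,\; -\alpha{\Lam}^{\#}_2,\; 0\bigr)
\]
is block diagonal, so by Sylvester's law of inertia
\[
\nu_{-}(K-\alpha K_G) \;=\; \nu_{-}(I_{n_1}-\alpha{\Lam}^{\#}_1) \;+\; \nu_{-}(-\alpha{\Lam}^{\#}_2).
\]
I will match the first summand to $n(\alpha,0)$ or $n(0,\alpha)$, and the second to $\nu_{\pm}(Z_N^TK_GZ_N)$.

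For the first summand, Remark~\ref{remark:can} tells us that the nonzero finite eigenvalues of $K-\lam K_G$ lying in $\mc{Z}_c^{\perp}$ are exactly $\lam_i = 1/\lam^{\#}_{1,i}$, where $\lam^{\#}_{1,i}$ are the diagonal entries of ${\Lam}^{\#}_1$. A short case split on the sign of $\alpha$ then translates $1-\alpha\lam^{\#}_{1,i}<0$ into $\lam_i\in(\alpha,0)$ when $\alpha<0$ and into $\lam_i\in(0,\alpha)$ when $\alpha>0$. This is the one place where the two conclusions of the lemma diverge, and the sign-flip bookkeeping is the only step that needs genuine care.

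For the second summand, I will reuse the change of basis already carried out in the proof of Theorem~\ref{thm:lan}. Since $\mc{R}(Z_N)\oplus\mc{R}(Z_C) = \mc{N}(K) = \mc{R}(W_2)\oplus\mc{R}(W_3)$ and $\mc{R}(Z_C)=\mc{R}(W_3)$, one can write $Z_N = W_2R_{22}+W_3R_{32}$ with $R_{22}$ non-singular. Combining with $K_GW_3=0$ and $W_3^TK_GW_2=0$ from the canonical form collapses this to
\[
Z_N^TK_GZ_N \;=\; R_{22}^T\,{\Lam}^{\#}_2 \,R_{22},
\]
a congruence by the invertible $R_{22}$. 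Since ${\Lam}^{\#}_2$ is non-singular by Theorem~\ref{thm:can}, so is $Z_N^TK_GZ_N$, and Sylvester's law gives $\nu_{\pm}(Z_N^TK_GZ_N) = \nu_{\pm}({\Lam}^{\#}_2)$. Finally, substituting $\nu_{-}(-\alpha{\Lam}^{\#}_2)=\nu_{-}({\Lam}^{\#}_2)$ for $\alpha<0$ and $\nu_{-}(-\alpha{\Lam}^{\#}_2)=\nu_{+}({\Lam}^{\#}_2)$ for $\alpha>0$ into the inertia identity above yields (i) and (ii) respectively, and the non-singularity claim falls out along the way.
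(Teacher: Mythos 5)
Your proof is correct and follows essentially the same route as the paper's: congruence via the canonical form of Theorem~\ref{thm:can}, Sylvester's law to split the inertia into the $\Lambda^{\#}_1$ and $\Lambda^{\#}_2$ blocks, the sign bookkeeping on $\alpha$, and the identity $Z_N^TK_GZ_N = R_{22}^T\Lambda^{\#}_2R_{22}$. The only cosmetic difference is that you apply the congruence directly to $K-\alpha K_G$, whereas the paper routes through the reversed pencil $K_G - \tfrac{1}{\alpha}K$ and then observes that $K - \alpha K_G = -\alpha(K_G - \tfrac{1}{\alpha}K)$ has the same negative inertia since $-\alpha>0$ when $\alpha<0$ (and analogously for $\alpha>0$); the two are equivalent.
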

\begin{proof} The proof is based on the following two facts: 
(1) $(\lam, x)$ is an eigenpair of the pencil $K - \lam K_G$ 
with non-zero finite eigenvalue $\lam$ and $x\in\mc{Z}_c^{\perp}$
if and only if 
$({\lam}^{\#}, x)$ is an eigenpair of the pencil $K_G - {\lam}^{\#} K$ 
with non-zero finite eigenvalue ${\lam}^{\#}=\lam^{-1}$ and 
$x\in\mc{Z}_c^{\perp}$.
(2) By the canonical form \eqref{eq:can}, we have
\begin{align*}
W^T(K_G - \frac{1}{\alpha}K)W = 
        \left[ \begin{array}{ccc}
        {\Lam}^{\#}_1 - \frac{1}{\alpha}I_{n_1} &       &       \\
        &       {\Lam}^{\#}_2   &       \\
        &       &       0
        \end{array} \right]. 
\end{align*}
Consequently, by Sylvester's law, we have 
\begin{align*}  
\nu_{-}(K_G - \frac{1}{\alpha}K) 
& = \nu_{-}({\Lam}^{\#}_1 - \frac{1}{\alpha}I_{n_1}) + \nu_{-}({\Lam}^{\#}_2), \\
\nu_{+}(K_G - \frac{1}{\alpha}K) 
& = \nu_{+}({\Lam}^{\#}_1 - \frac{1}{\alpha}I_{n_1}) + \nu_{+}({\Lam}^{\#}_2).
\end{align*}

Now, for (i), since $\alpha < 0$,
\begin{align}  
n(\alpha,0) & ={n}^{\#}(-\infty,\frac{1}{\alpha})
 = \nu_{-}({\Lam}^{\#}_1-\frac{1}{\alpha}I_{n_1}) 
 = \nu_{-}(K_G - \frac{1}{\alpha}K) - \nu_{-}({\Lam}^{\#}_2) 
 = \nu_{-}(K - \alpha K_G) - \nu_{-}({\Lam}^{\#}_2)  \label{eq:countEig1},
\end{align} 
where for the second equality, see Remark \ref{remark:can}. 
For (ii), since 
$\alpha > 0$, 
\begin{align}  
n(0,\alpha)
 ={n}^{\#}(\frac{1}{\alpha},+\infty) 
 =\nu_{+}({\Lam}^{\#}_1-\frac{1}{\alpha}I_{n_1}) 
 = \nu_{+}(K_G - \frac{1}{\alpha}K) - \nu_{+}({\Lam}^{\#}_2) 
 = \nu_{-}(K - \alpha K_G) - \nu_{+}({\Lam}^{\#}_2)  \label{eq:countEig2}.
\end{align} 
On the other hand, by the canonical form \eqref{eq:can}, we have 
\begin{align*}
\mc{N}(K) = \mc{R}(Z_N)\oplus\mc{R}(Z_C) = \mc{R}(W_2)\oplus\mc{R}(W_3)
\quad\mbox{and}\quad
\mc{Z}_c = \mc{R}(Z_C) = \mc{R}(W_3),
\end{align*}
and 
\begin{align*}
Z_N = W_2R_{22} + W_3R_{32},
\end{align*}
where $R_{22}\in\R^{n_2\times n_2}$, $R_{32}\in\R^{n_3\times n_2}$ 
and $R_{22}$ is non-singular.
Also, we know that $W_2^TK_GW_2 = {\Lam}^{\#}_2$.
Therefore,
\begin{align*}
Z_N^TK_GZ_N = R_{22}^T(W_2^TK_GW_2)R_{22} = R_{22}^T{\Lam}^{\#}_2R_{22}.
\end{align*}
This implies that the matrix $Z_N^TK_GZ_N$ is non-singular, 
and by Sylvester's law, we have 
\begin{align}  \label{eq:Lam2}
\nu_{-}({\Lam}^{\#}_2) = \nu_{-}(Z_N^TK_GZ_N)
\quad\mbox{and}\quad
\nu_{+}({\Lam}^{\#}_2) = \nu_{+}(Z_N^TK_GZ_N).
\end{align}
The lemma is an immediate consequence 
of~\eqref{eq:countEig1}, \eqref{eq:countEig2} and \eqref{eq:Lam2}. 
\end{proof}

Lemma \ref{lemma:countEig} establishes the relation between
the number of eigenvalues in the interval
$(\alpha, 0)$ or $(0, \alpha)$ and the inertia $\nu_{-}(K-\alpha K_G)$.
Below, we discuss how to express the inertia $\nu_{-}(K-\alpha K_G)$
in terms of the augmented matrix $A_{\alpha}$ in \eqref{eq:augLin} 
and the submatrix $S^{\alpha}_{11}$ in \eqref{eq:rrp}. 

\begin{lemma}  \label{lemma:countInertia}
In terms of the augmented matrix $A_{\alpha}$ in \eqref{eq:augLin} 
and the submatrix $S^{\alpha}_{11}$ in \eqref{eq:rrp}, 
\begin{align}  \label{eq:countInertia1}
\nu_{-}(K - \alpha K_G) = \nu_{-}(A_{\alpha}) - \dim(\mc{Z}_c)
 = \nu_{-}(S^{\alpha}_{11}).
\end{align}
\end{lemma}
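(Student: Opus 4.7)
The plan is to verify the two equalities separately. The second equality $\nu_{-}(K - \alpha K_G) = \nu_{-}(S^{\alpha}_{11})$ follows directly from Theorem~\ref{thm:rrp}(2) upon replacing $\sigma$ by $\alpha$, so no extra work is needed there.

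For the first equality $\nu_{-}(A_{\alpha}) = \nu_{-}(K-\alpha K_G) + \dim(\mc{Z}_c)$, I plan to block-diagonalize $A_{\alpha}$ by a congruence that exploits the identity $(K-\alpha K_G)Z_C = 0$. First, I would pick an orthogonal matrix $Q = \onebytwo{Q_1}{Q_2}\in\R^{n\times n}$ whose last $n_3:=\dim(\mc{Z}_c)$ columns span $\mc{Z}_c$. Since the columns of $Q_2$ lie in $\mc{N}(K-\alpha K_G)$ and $K-\alpha K_G$ is symmetric, $Q^T(K-\alpha K_G)Q$ becomes block diagonal with blocks $T_1 := Q_1^T(K-\alpha K_G)Q_1$ and $0_{n_3}$. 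Moreover, because $\mc{R}(Q_2) = \mc{R}(Z_C)$, there exists a nonsingular $T \in \R^{n_3\times n_3}$ with $Z_C = Q_2 T$, so that $Q^T Z_C = \twobyone{0}{T}$, the top block vanishing because $Q_1$ is orthogonal to $\mc{Z}_c$.

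Next, I would apply the congruence $\diag(Q, I_{n_3})$ to $A_{\alpha}$, obtaining
\begin{equation*}
\diag(Q^T, I_{n_3})\, A_{\alpha} \,\diag(Q, I_{n_3})
= \begin{bmatrix} T_1 & 0 & 0 \\ 0 & 0 & T \\ 0 & T^T & 0 \end{bmatrix}.
\end{equation*}
By Sylvester's law of inertia, this matrix has the same inertia as $A_{\alpha}$, and I would read off its inertia block by block. The block $T_1$ contributes $\nu_{-}(T_1) = \nu_{-}(K-\alpha K_G)$, since $Q^T(K-\alpha K_G)Q = \diag(T_1,0_{n_3})$ is congruent to $K-\alpha K_G$ and the appended $0_{n_3}$ summand carries no signed eigenvalues. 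The $2n_3\times 2n_3$ anti-diagonal block $\twobytwo{0}{T}{T^T}{0}$ has inertia $(n_3, n_3, 0)$, because the SVD of $T$ pairs its eigenvalues as $\pm\sigma_i(T)\neq 0$. Summing the three contributions gives $\nu_{-}(A_{\alpha}) = \nu_{-}(K-\alpha K_G) + n_3$, which rearranges to the first equality.

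The only mildly non-routine step is computing the inertia of the anti-diagonal block, but that is standard and can be made explicit via a further congruence built from the SVD of $T$. Otherwise the proof is mostly bookkeeping to keep the block sizes and the zero blocks aligned; the two key structural facts I rely on are $(K-\alpha K_G)Z_C = 0$ and $\mc{R}(Z_C) = \mc{Z}_c = \mc{N}(K-\alpha K_G)$.
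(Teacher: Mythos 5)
Your proof is correct, and while the bookkeeping ends up the same, it takes a genuinely different route from the paper. The paper constructs an explicit eigen-decomposition of $A_\alpha$: with the SVD $Z_C = U\Sigma V^T$ and a partial eigen-decomposition $(K-\alpha K_G)X = X\Lam$ on the nonzero eigenvalues, it verifies that the $(n+n_3)\times(n+n_3)$ matrix with column blocks $\twobyone{X}{0}$, $\twobyone{\frac{1}{\sqrt{2}}U}{\frac{1}{\sqrt{2}}V}$, $\twobyone{\frac{1}{\sqrt{2}}U}{-\frac{1}{\sqrt{2}}V}$ diagonalizes $A_\alpha$ with eigenvalue blocks $\Lam$, $\Sigma$, $-\Sigma$, then reads off $\nu_-(A_\alpha) = \nu_-(\Lam) + n_3 = \nu_-(K-\alpha K_G) + n_3$ (the diagonal entries of $\Sigma$ being positive because $Z_C$ has full rank). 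You instead apply the block congruence $\diag(Q, I_{n_3})$ and invoke Sylvester's law of inertia twice: once to transfer the inertia of $A_\alpha$ to the block matrix $\diag\!\left(T_1, \twobytwo{0}{T}{T^T}{0}\right)$, and once more (via the SVD of $T$) to identify the anti-diagonal block's inertia as $(n_3,n_3,0)$. Your route is arguably more elementary since it never needs to verify that the eigenvector matrix is nonsingular and relies only on the two structural facts you list; the paper's route produces the full spectrum of $A_\alpha$ as a byproduct, which is more information than the inertia count strictly requires. Both proofs dispatch the second equality identically by citing Theorem~\ref{thm:rrp}.
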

\begin{proof}
Considering the singular value decomposition $Z_C=U\Sigma V^T$,
$\Sigma\in\R^{n_3\times n_3}$,
and the partial eigen-decomposition $(K - \alpha K_G)X = X\Lam$,
where $\Lam\in\R^{(n-n_3)\times(n-n_3)}$ is a diagonal matrix 
consisting of all the non-zero eigenvalues of $K - \alpha K_G$,
we can construct the following eigen-decomposition of $A_{\alpha}$:
\begin{align}  \label{eq:spectrumA}
A_{\alpha} 
\left[ \begin{array}{ccc}
X & \frac{1}{\sqrt{2}}U & \frac{1}{\sqrt{2}}U  \\
0 & \frac{1}{\sqrt{2}}V & -\frac{1}{\sqrt{2}}V
\end{array} \right]
& = \left[ \begin{array}{ccc}
X & \frac{1}{\sqrt{2}}U & \frac{1}{\sqrt{2}}U  \\
0 & \frac{1}{\sqrt{2}}V& -\frac{1}{\sqrt{2}}V
\end{array} \right]
\left[ \begin{array}{ccc}
\Lam &  &   \\
& \Sigma &  \\
& & -\Sigma
\end{array} \right].  
\end{align}
In \eqref{eq:spectrumA}, 
the diagonal entries of $\Sigma$ are positive since $Z_C$ is of full rank,
and the first equality of~\eqref{eq:countInertia1} is proved by
counting the number of negative eigenvalues of $A_{\alpha}$.
The second equality immediately follows from Theorem \ref{thm:rrp}.
\end{proof}

Combining Lemmas \ref{lemma:countEig} and \ref{lemma:countInertia},
we have the following theorem which provides 
a computational approach to count the number of eigenvalues 
of $K-\lambda K_G$ using
the inertias of $A_{\alpha}$ or $S^{\alpha}_{11}$. 

\begin{theorem} 
In terms of the augmented matrix $A_{\alpha}$ in \eqref{eq:augLin}
and the submatrix $S^{\alpha}_{11}$ in \eqref{eq:rrp}, we have 
\begin{itemize}
\item[$(i)$]
$n(\alpha,0)=\nu_{-}(A_{\alpha})-\dim(\mc{Z}_c)-\nu_{-}(Z_N^TK_GZ_N)
=\nu_{-}(S^{\alpha }_{11})-\nu_{-}(Z_N^TK_GZ_N)$, 
if $\alpha< 0$.

\item[$(ii)$]
$n(0,\alpha)=\nu_{-}(A_{\alpha})-\dim(\mc{Z}_c)-\nu_{+}(Z_N^TK_GZ_N)
=\nu_{-}(S^{\sigma}_{11})-\nu_{+}(Z_N^TK_GZ_N)$, 
if $\alpha > 0$.
\end{itemize}
\end{theorem}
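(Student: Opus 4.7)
The plan is to obtain the theorem as an essentially immediate corollary of Lemmas~\ref{lemma:countEig} and~\ref{lemma:countInertia}, since both ingredients have already been established. The only real work is a careful bookkeeping of signs and of the two different surrogates (the augmented matrix $A_\alpha$ and the reduced submatrix $S^{\alpha}_{11}$) for the inertia of $K-\alpha K_G$.

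First I would write down the two identities from Lemma~\ref{lemma:countInertia}, namely
\begin{align*}
\nu_{-}(K - \alpha K_G) \;=\; \nu_{-}(A_{\alpha}) - \dim(\mc{Z}_c) \;=\; \nu_{-}(S^{\alpha}_{11}),
\end{align*}
which are valid for any real $\alpha$ (in particular for $\alpha \ne 0$ as needed here). These give two equivalent expressions for $\nu_{-}(K-\alpha K_G)$ and will be plugged into Lemma~\ref{lemma:countEig} as appropriate.

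For part~(i), assuming $\alpha<0$, I invoke Lemma~\ref{lemma:countEig}(i) to get $n(\alpha,0)=\nu_{-}(K-\alpha K_G)-\nu_{-}(Z_N^TK_GZ_N)$, then substitute each of the two expressions from Lemma~\ref{lemma:countInertia} for $\nu_{-}(K-\alpha K_G)$; this yields the chain of equalities in (i). For part~(ii), assuming $\alpha>0$, I do exactly the same substitution into Lemma~\ref{lemma:countEig}(ii), except that $\nu_{-}(Z_N^TK_GZ_N)$ is replaced by $\nu_{+}(Z_N^TK_GZ_N)$ because of the sign reversal in the spectral transformation $\lambda\mapsto 1/\lambda$ and the resulting switch between negative and positive eigenvalues of $\Lambda_2^{\#}$. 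Since the non-singularity of $Z_N^TK_GZ_N$ is already recorded inside Lemma~\ref{lemma:countEig}, all the inertia counts on the right-hand side are well defined.

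There is not really a hard step here: the theorem is a stitching together of prior lemmas, and my only concern would be to make sure the conditions of both lemmas are simultaneously met, namely $\alpha\ne 0$ (so that $K-\alpha K_G$ is well-defined and its inertia counts make sense) and that $\alpha$ is not itself an eigenvalue of $K-\lambda K_G$ (so that $K-\alpha K_G$ has no zero eigenvalues other than those coming from $\mc{Z}_c$, making the counts match the number of eigenvalues strictly inside the interval). With those implicitly assumed, the proof is three lines of substitution and requires no new ideas beyond Lemmas~\ref{lemma:countEig} and~\ref{lemma:countInertia}.
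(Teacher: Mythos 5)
Your proposal is correct and matches the paper's approach exactly: the paper presents this theorem as an immediate consequence of combining Lemma~\ref{lemma:countEig} with Lemma~\ref{lemma:countInertia}, precisely the substitution you describe, and gives no further argument.
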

\begin{remark}{\rm
In practice, the inertias $\nu_{-}(A_{\alpha})$ and $\nu_{-}(S_{11}^{\alpha})$
are by-products of the sparse \ldlt factorizations of the matrices 
$A_{\alpha}$ and $S_{11}^{\alpha}$, respectively \cite[p.~214]{Higham2002}.
The inertias $\nu_{-}(Z_N^TK_GZ_N)$ and $\nu_{+}(Z_N^TK_GZ_N)$ 
can be easily computed since the size of $Z_N^TK_GZ_N$ is typically small
in buckling analysis. 
}
\end{remark}


\section{Numerical examples}  \label{sec:example}
In this section, we first use a synthetic example 
to illustrate the growth of the norms of the Lanczos vectors 
with $K$-inner product and the consequence of the growth
as discussed by Meerbergen \cite{Meerbergen2001} 
and Stewart \cite{Stewart2009}.
Then we demonstrate the efficacy of the proposed shift-invert Lanczos method  
for an example arising in industrial buckling analysis of structures. 

Algorithm \ref{alg:LanFRO} is implemented in MATLAB \cite[p.~120]{Bai2000}.
The full re-orthogonalization is performed.
The accuracy of a computed eigenpair $(\wht{\lam}_i,\wht{x}_i)$ of 
the buckling eigenvalue problem \eqref{eq:prob} is measured by the 
relative residual norm
\begin{align*}
\eta(\wht{\lam}_i,\wht{x}_i) \equiv
	\frac{\|K\wht{x}_i - \wht{\lam}_iK_G\wht{x}_i\|_2}
       	     {(\|K\|_1 + |\wht{\lam}_i|\|K_G\|_1)\|\wht{x}_i\|_2}. 
\end{align*}
The Euclidean angle $\theta_i=\angle(\wht{x}_i, \mc{Z}_c)$ 
is computed for checking if $\wht{x}_i$ is perpendicular to 
the common nullspace $\mc{Z}_c$ of $K$ and $K_G$ \cite{Fraysse1998,Higham1998}.


\begin{example}{\rm~
Let us consider the following matrix 
pair $(K, K_G)$ similar to the ones constructed by 
Meerbergen \cite{Meerbergen2001} and Stewart \cite{Stewart2009}: 
\begin{align*}  
K = Q\Lam Q^T\in\R^{n\times n}
\quad\mbox{and}\quad
K_G = Q\Phi Q^T\in\R^{n\times n},
\end{align*}
where $Q\in\R^{n\times n}$ is a random orthogonal matrix, 
$\Lam\in\R^{n\times n}$ and $\Phi\in\R^{n\times n}$ are
diagonal matrices with diagonal elements 
\[
\Lam_{kk} = 
\begin{cases}
k,\quad\mbox{if}\quad 1\leq k \leq n-m  \\
0,\quad\mbox{otherwise}
\end{cases}
\quad\mbox{and}\quad
\Phi_{kk} = (-1)^{k},\quad 1\leq k \leq n.
\]
By construction, $K$ is positive semi-definite
and $K_G$ is indefinite, and 
the pencil $K - \lam K_G$ is regular.
The last $m$ columns of $Q$ form a basis of the nullspace $\mc{N}(K)$.
For $1\leq k\leq n-m$, 
the $k$-th column of $Q$ is an eigenvector 
and the associated eigenvalue is $\lam_k=(-1)^k\cdot k$.
The zero eigenvalue of $C\equiv (K-\sigma K_G)^{-1}K$ 
is a well-separated eigenvalue, and the associated eigenspace
is also the nullspace of $K$.
We use the MATLAB function \verb|ldl| to compute 
the \ldlt factorization of the shifted matrix $K-\sigma K_G$. 

\begin{figure}[t]
\centering
\includegraphics[width=0.32\textwidth]{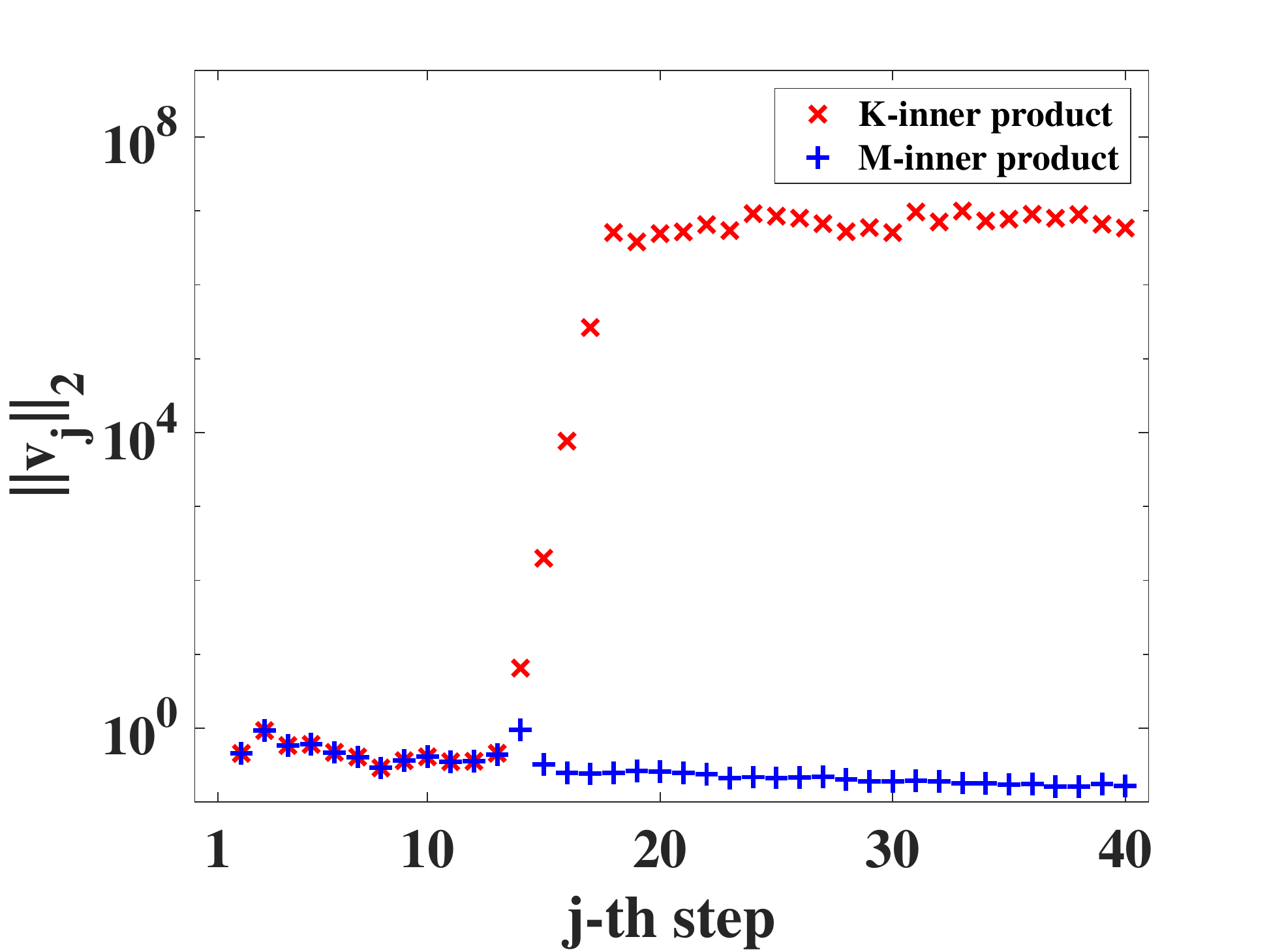}
\includegraphics[width=0.32\textwidth]{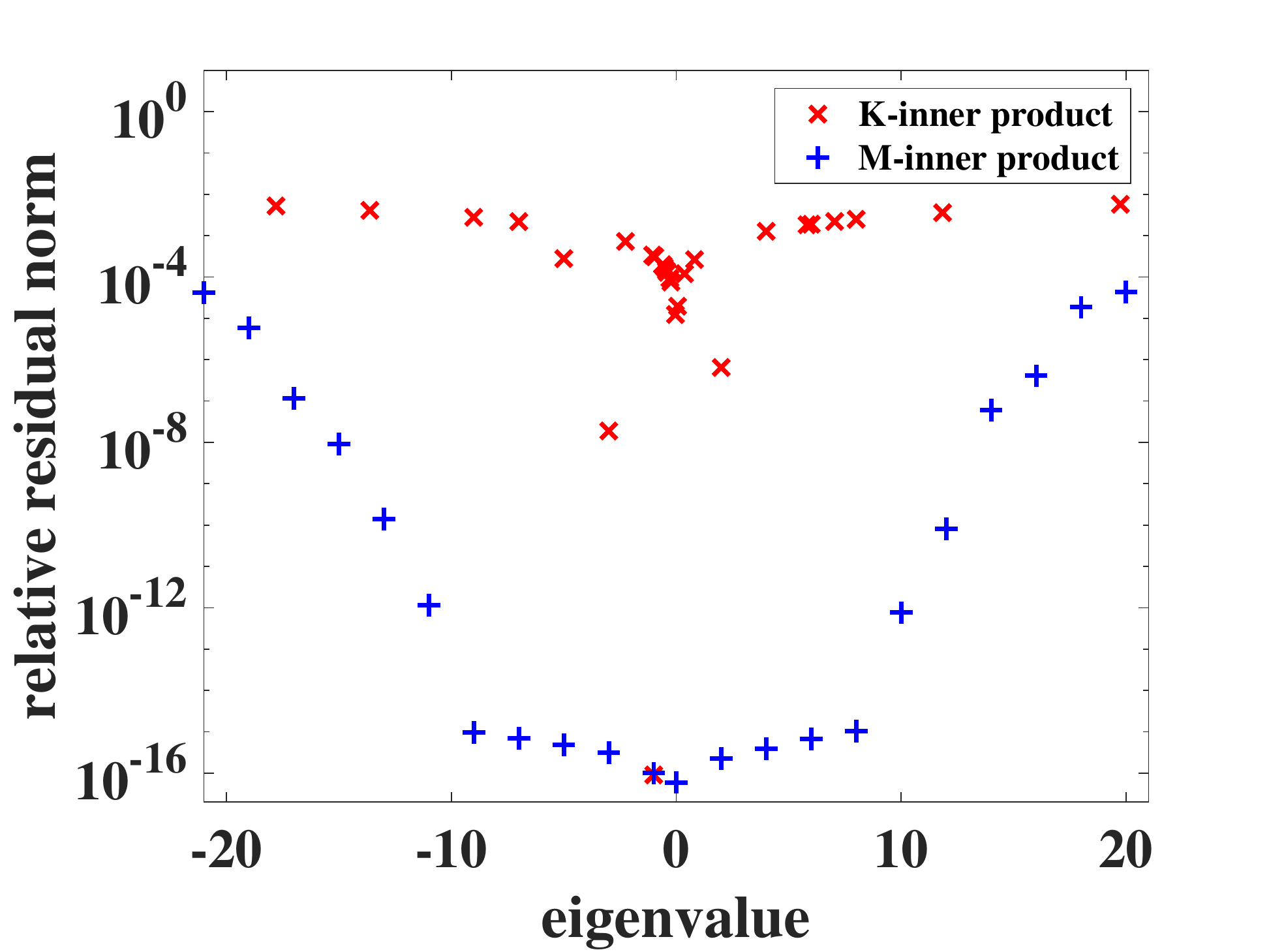}
\includegraphics[width=0.32\textwidth]{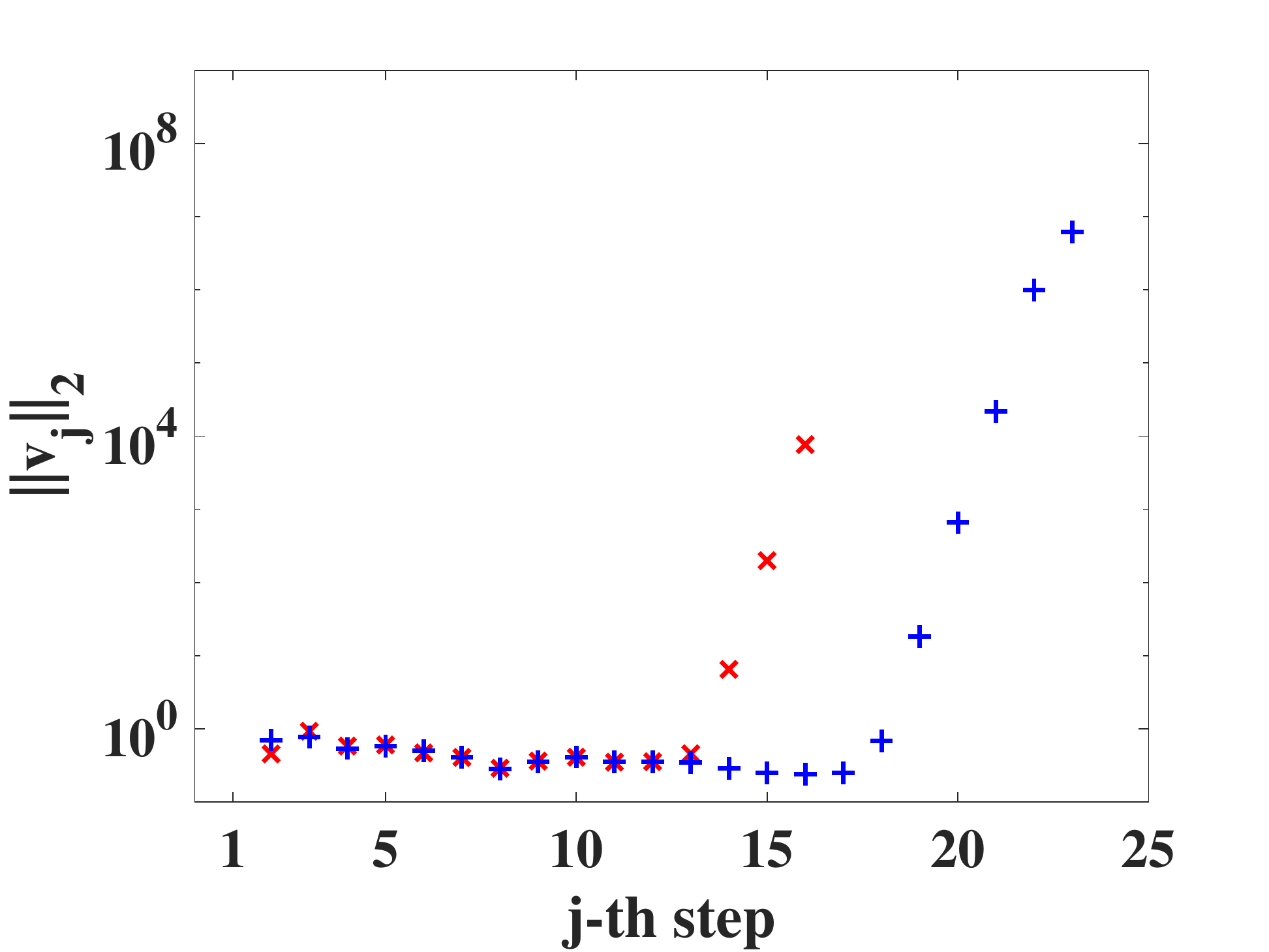}
\caption{
Left: the $2$-norms of the Lanczos vectors $v_j$.
Middle: the relative residual norms of the approximate eigenpairs
$(\wht{\lam}_i,\wht{x}_i)$.
Right: the $2$-norms of the Lanczos vectors $v_j$
with (+) and without (x) implicit restart.
}
\label{fig:InnerProduct}
\end{figure}

For numerical experiments, 
we take $n=500$ and $m=1$. 
We use the buckling spectral transformation 
\eqref{eq:bucklingTr} with the shift $\sigma=-0.6$. 
We run the Lanczos method with $K$-inner product, and 
the starting vector $Cx_0$ with $x_0=[1,\ \ldots,\ 1]^T$.
The approximate eigenpairs $(\wht{\lam}_i,\wht{x}_i)$ of 
\eqref{eq:prob} are computed by 
$(\wht{\lam}_i,\wht{x}_i)=(\frac{\sigma\wht{\mu}_i}{\wht{\mu}_i-1},\wht{x}_i)$.

The left plot of Figure \ref{fig:InnerProduct}
shows the $2$-norms of 40 Lanczos vectors $v_j$. 
As observed by Meerbergen \cite{Meerbergen2001} and Stewart \cite{Stewart2009},
the $2$-norms of Lanczos vectors $v_j$ grows rapidly.  
Consequently, as shown in 
the middle plot of Figure \ref{fig:InnerProduct}, 
the accuracy of approximate eigenpairs 
$(\wht{\lam}_i,\wht{x}_i)$ deteriorates. 
In contrast, when we replace the $K$-inner product by 
the positive definite $M$-inner product with $H_N=I_{m}$. 
We observe that, the $2$-norms of the Lanczos vectors are well bounded.
Multiple eigenvalues near the shift $\sigma$ 
are computed with the relative residual norms around 
the machine precision. 

We note that in \cite{Meerbergen2001}, Meerbergen proposed to control 
the norms of the Lanczos vectors by applying implicit restart.
We experimented the scheme of implicit restart 
at $16$-th iteration of the Lanczos method.
The results are shown in the right plot of Figure \ref{fig:InnerProduct}.
We can see that the $2$-norms of the Lanczos vectors
with and without implicit restart grows rapidly.   
}\end{example}


\begin{example}{\rm ~ 

This is an example from the buckling analysis 
of a finite element model of an airplane shown in Figure~\ref{fig:plane}.
The size of the pencil $K-\lambda K_G$ is $n=67,512$.
The stiffness matrix $K$ is positive semi-definite
and the dimension of the nullspace $\mc{N}(K)$ is known to be $6$, 
which corresponds to the $6$ rigid body modes \cite{Farhat1998}.
The basis $Z$ of $\mc{N}(K)$ is computed by the 
Gaussian-based method \cite{Farhat1998}.
The dimension of the common nullspace $\mc{Z}_c$ of $K$ and $K_G$ 
is $3$, which can be easily computed from the basis
$Z$, see \cite[Theorem 6.4.1]{Golub2013}.
The accuracy of the bases is shown in the table in Figure \ref{fig:plane}.
We are interested in computing the nonzero eigenvalues of
the pencil $K-\lam K_G$ in an interval around zero
and the associated eigenvectors perpendicular to 
the common nullspace $\mc{Z}_c$.

\begin{figure}
\begin{center}
\includegraphics[width=0.21\textwidth,angle=-90,trim=82mm 85mm 85mm 83mm,clip]
{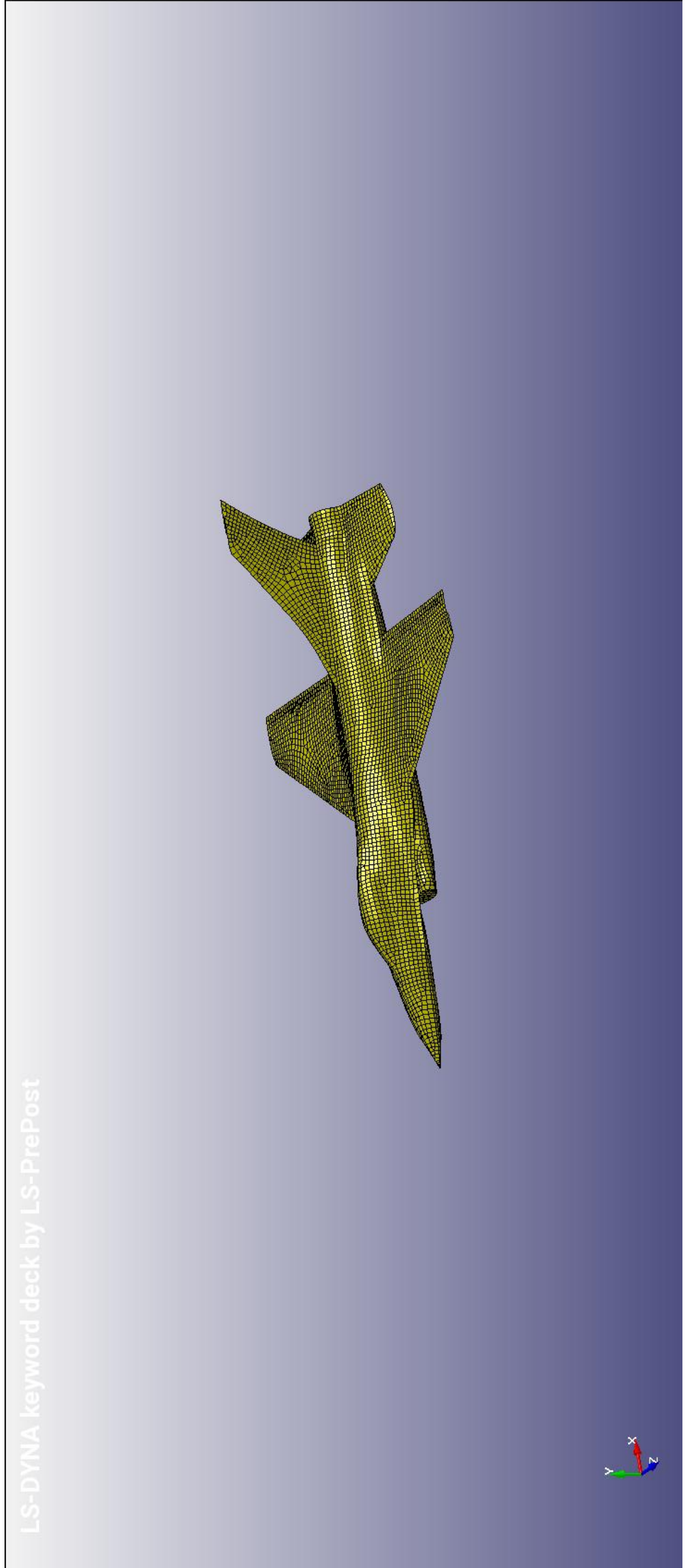} 
\hskip 0.5em 
\raisebox{-50pt}{
\begin{tabular}{| c || c | c| c|} \hline
$i$  &  $d_i/\|K_G\|_1$   &  $\frac{\|Kz_i\|_2}{\|K\|_1\|z_i\|_2}$  &  
$\frac{\|K_Gz_i\|_2}{\|K_G\|_1\|z_i\|_2}$  \\ \hline \hline
$1$ & $6.90\cdot 10^{-5}$ &  $2.74\cdot 10^{-16}$  & $6.78\cdot 10^{-5}$  \\
$2$ & $3.25\cdot 10^{-5}$ &  $4.88\cdot 10^{-16}$  &  $9.06\cdot 10^{-6}$  \\
$3$ & $2.32\cdot 10^{-5}$ &  $4.71\cdot 10^{-16}$  &  $1.19\cdot 10^{-5}$  \\ 
\hline
$4$ & $7.32\cdot 10^{-16}$ &  $2.68\cdot 10^{-17}$  &  $5.01\cdot 10^{-18}$\\
$5$ & $1.26\cdot 10^{-16}$ &  $1.90\cdot 10^{-17}$  &  $4.89\cdot 10^{-18}$  \\
$6$ & $7.81\cdot 10^{-18}$ &  $2.37\cdot 10^{-17}$  &  $5.00\cdot 10^{-18}$  \\ 
\hline
\end{tabular}
}
\caption{Left: Finite element model of an airplane.
Right: Accuracy of the bases for the nullspace of $K$
and common nullspace of $K$ and $K_G$, where
the second column shows the singular values $d_i$ of 
$K_GY$ with $Y$ being an orthonormal basis of $\mc{N}(K)$.
The third and fourth columns show the accuracy of 
the basis $Z=[Z_N\ Z_C]=[z_1\ z_2\ \ldots\ z_6]$.
} \label{fig:plane} 
\end{center}
\end{figure}

We used two methods for computing the matrix-vector product $u=Cv$
described in \S\ref{sec:issues}. For Method 2,  
we determine the permutation matrix $P$ by
maximizing the number of non-zero entries 
in the last $n_3$ columns of $S$ in \eqref{eq:rrp}. 
The MATLAB function \verb|ldl|, which uses 
MA57 \cite{Duff2004} for real sparse matrices,
is used to compute the sparse \ldlt factorization of 
the augmented matrix $A_{\sigma}$ and the submatrix $S^{\sigma}_{11}$.
{The pivot tolerance $\tau=0.1$ is used to
control the numerical stability of the factorization.}
In defining the positive definite matrix $M$, 
we use $H_N= \omega D_N$ and $H_C= \omega I_{n_3}$,
where $D_N$ is a diagonal matrix to 
normalize each column of the matrix $K_GZ_N$ and 
$\omega = \|K\|_1$. 
The starting vector of the Lanczos procedure is $v=Cx_0$ 
with $x_0$ being a random vector.  

To monitor the progress of the Lanczos method,
an approximate eigenpair $(\wht{\mu}_i,\wht{x}_i)$ 
computed from an eigenpair $(\wht{\mu}_i,\wht{s}_i)$ of 
the reduced matrix $T_j$  
is considered to have converged 
if the following two  conditions are satisfied:
\begin{align}  \label{eq:criterion}
|\wht{\mu}_i|\geq tol 
\quad\mbox{and}\quad
\frac{|\sigma|}{(\wht{\mu}_i-1)^2}|\beta_{j}||e_j^T\wht{s}_i| < tol,
\end{align}
where the first condition is used to exclude zero eigenvalues 
and $tol$ is a prescribed tolerance 
(see \cite{Ericsson1980,Grimes1994} and \cite[p.~357]{Parlett1998}). 
In this numerical example, we use the tolerance $tol=10^{-6}$.

We now show the numerical results for computing nonzero eigenvalues of 
the pencil $K-\lambda K_G$ and corresponding eigenvectors  
perpendicular to the common nullspace $\mc{Z}_c$ in the interval $(-8,8)$. 
First, let us consider the left-half interval $(-8,0)$.  With the shift $\sigma = -4.0$, 
the shift-invert Lanczos method (Algorithm 1) 
computed 12 eigenvalues to the machine precision 
in the interval $(-8,0)$ at $38$-th iteration with
either method for the matrix-vector product $u = Cv$. 
The accuracy of the computed eigenpairs 
$(\wht{\lam}_i=\frac{\sigma\wht{\mu}_i}{\wht{\mu}_i-1},\wht{x}_i)$
are shown in Table \ref{tab:results1}.
To validate the number of eigenvalues in the interval $(-8,0)$,
we use the counting scheme described in \S\ref{sec:inertias}.
Using the inertias of the augmented matrix $A_{\alpha}$ with $\alpha = -8$,
by Theorem \ref{thm:rrp}, we have
\[
n(-8,0) = \nu_{-}(A_{\alpha})-\dim(\mc{Z}_c)-\nu_{-}(Z_N^TK_GZ_N)
= 18 - 3 - 3 = 12.
\]
This matches the number of eigenvalues found in the interval.
Alternatively, by using
the inertias of the submatrix $S^{\alpha}_{11}$ with $\alpha = -8$
and Theorem \ref{thm:rrp}, we have
\[
n(-8,0) = \nu_{-}(S^{\alpha}_{11}) - \nu_{-}(Z_N^TK_GZ_N) = 15 - 3 = 12.
\]
This also matches the number of computed eigenvalues in the interval.

\begin{table}[t]
\caption{
Results of 12 computed eigenvalues in the interval $(-8,0)$ after
38 steps of the Lanczos method with the shift $\sigma=-4.0$.
For the 3rd and 4th columns, 
with $\wht{X}\equiv[\wht{x}_1\ \ldots\ \wht{x}_{12}]$,
$\|\wht{X}^TM\wht{X}-I_{12}\|_F=3.82\cdot 10^{-12}$
and the matrix-vector product $u=Cv$ 
is computed by Method 1.
For the 5th and 6th columns,
$\|\wht{X}^TM\wht{X}-I_{12}\|_F=4.55\cdot 10^{-12}$
and the matrix-vector product $u=Cv$ 
is computed by Method 2.
} \label{tab:results1}
\centering
\begin{tabular}{| c || c | c c | c c |}
\hline
$i$  &  $\wht{\lam}_i$  &  
$\eta(\wht{\lam}_i, \wht{x}_i)$  &
$\cos\angle(\wht{x}_i,\mc{Z}_c)$ &
$\eta(\wht{\lam}_i, \wht{x}_i)$  &
$\cos\angle(\wht{x}_i,\mc{Z}_c)$ \\
\hline \hline
$1$   &  $-2.716598$  &  $1.35\cdot 10^{-17}$  &  $2.47\cdot 10^{-17}$  
				  &  $1.47\cdot 10^{-17}$  &  $1.63\cdot 10^{-17}$  \\
$2$   &  $-2.883589$  &  $1.44\cdot 10^{-17}$  &  $6.32\cdot 10^{-17}$  
				  &  $2.47\cdot 10^{-17}$  &  $4.31\cdot 10^{-17}$  \\
$3$   &  $-3.292700$  &  $9.49\cdot 10^{-18}$  &  $5.52\cdot 10^{-17}$  
				  &  $1.14\cdot 10^{-17}$  &  $3.66\cdot 10^{-17}$  \\
$4$   &  $-3.378406$  &  $1.08\cdot 10^{-17}$  &  $1.03\cdot 10^{-17}$  
				  &  $1.20\cdot 10^{-17}$  &  $3.21\cdot 10^{-17}$  \\
$5$   &  $-5.754628$  &  $2.36\cdot 10^{-17}$  &  $5.81\cdot 10^{-17}$  
				  &  $2.11\cdot 10^{-17}$  &  $5.43\cdot 10^{-17}$  \\
$6$   &  $-5.854071$  &  $2.19\cdot 10^{-17}$  &  $1.23\cdot 10^{-16}$  
				  &  $2.54\cdot 10^{-17}$  &  $2.36\cdot 10^{-17}$  \\
$7$   &  $-6.089281$  &  $6.16\cdot 10^{-17}$  &  $1.44\cdot 10^{-16}$  
				  &  $2.17\cdot 10^{-17}$  &  $2.12\cdot 10^{-17}$  \\
$8$   &  $-6.228974$  &  $3.40\cdot 10^{-17}$  &  $8.58\cdot 10^{-17}$  
				  &  $2.06\cdot 10^{-17}$  &  $2.29\cdot 10^{-17}$  \\
$9$   &  $-6.784766$  &  $1.91\cdot 10^{-15}$  &  $7.52\cdot 10^{-17}$  
				  &  $8.37\cdot 10^{-16}$  &  $1.67\cdot 10^{-17}$  \\
$10$  &  $-6.886759$  &  $5.61\cdot 10^{-15}$  &  $5.07\cdot 10^{-17}$  
				   &  $2.88\cdot 10^{-15}$  &  $5.43\cdot 10^{-17}$  \\
$11$  &  $-7.561377$  &  $1.94\cdot 10^{-12}$  &  $2.70\cdot 10^{-16}$  
				   &  $1.87\cdot 10^{-12}$  &  $7.41\cdot 10^{-17}$  \\
$12$  &  $-7.745144$  &  $3.87\cdot 10^{-12}$  &  $1.26\cdot 10^{-16}$  
				   &  $3.82\cdot 10^{-12}$  &  $1.28\cdot 10^{-16}$  \\
\hline
\end{tabular}	
\end{table}

Next let us consider the right-half interval $(0,8)$. 
In this case, we use the shift $\sigma = 4.0$. 
By the shift-invert Lanczos method (Algorithm~1), we found 
13 eigenvalues to the machine precision
in the interval $(0,8)$ at $44$-th iteration with
either method for the matrix-vector product $u = Cv$.
The accuracy of the computed eigenpairs
$(\wht{\lam}_i=\frac{\sigma\wht{\mu}_i}{\wht{\mu}_i-1},\wht{x}_i)$
are shown in Table \ref{tab:results2}.
To validate the number of eigenvalues in the interval $(0,8)$,
we again use the counting scheme described in \S\ref{sec:inertias}.
Using the inertias of the augmented matrix $A_{\alpha}$ with $\alpha = 8$,
by Theorem \ref{thm:rrp}, we have
\[
n(0,8) = \nu_{-}(A_{\alpha})-\dim(\mc{Z}_c)-\nu_{+}(Z_N^TK_GZ_N)
= 16 - 3 - 0 = 13.
\]
This matches the number of eigenvalues found in the interval.
Alternatively, by using
the inertias of the submatrix $S^{\alpha}_{11}$ with $\alpha = 8$
and Theorem \ref{thm:rrp}, we have
\[
n(0,8) = \nu_{-}(S^{\alpha}_{11}) - \nu_{+}(Z_N^TK_GZ_N) = 13 - 0 = 13.
\]
This also matches the number of computed eigenvalues in the interval.

\begin{table}[t]
\caption{
Results of 13 computed eigenvalues in the interval $(0,8)$ after
44 steps of the Lanczos method with the shift $\sigma=4.0$.
For the 3rd and 4th columns, 
with $\wht{X}\equiv[\wht{x}_1\ \ldots\ \wht{x}_{13}]$,
$\|\wht{X}^TM\wht{X}-I_{13}\|_F=1.63\cdot 10^{-11}$
and the matrix-vector product $u=Cv$ 
is computed by Method 1.
For the 5th and 6th columns,
$\|\wht{X}^TM\wht{X}-I_{13}\|_F=1.23\cdot 10^{-11}$
and the matrix-vector product $u=Cv$ 
is computed by Method 2. 
} 
\label{tab:results2}
\centering
\begin{tabular}{| c || c | c c | c c |}
\hline
$i$  &  $\wht{\lam}_i$  &  
$\eta(\wht{\lam}_i, \wht{x}_i)$  &
$\cos\angle(\wht{x}_i,\mc{Z}_c)$ &
$\eta(\wht{\lam}_i, \wht{x}_i)$  &
$\cos\angle(\wht{x}_i,\mc{Z}_c)$ \\
\hline\hline
$1$   &  $2.967043$  &  $1.95\cdot 10^{-17}$  &  $3.01\cdot 10^{-17}$  
				 &  $3.88\cdot 10^{-17}$  &  $3.68\cdot 10^{-17}$  \\
$2$   &  $3.025965$  &  $2.64\cdot 10^{-17}$  &  $2.00\cdot 10^{-16}$  
				 &  $4.05\cdot 10^{-17}$  &  $1.46\cdot 10^{-16}$  \\
$3$   &  $3.917831$  &  $1.96\cdot 10^{-17}$  &  $1.84\cdot 10^{-16}$  
				 &  $1.60\cdot 10^{-17}$  &  $1.29\cdot 10^{-16}$  \\
$4$   &  $4.008941$  &  $2.09\cdot 10^{-17}$  &  $2.76\cdot 10^{-16}$  
				 &  $1.60\cdot 10^{-17}$  &  $1.70\cdot 10^{-16}$  \\
$5$   &  $4.591063$  &  $2.31\cdot 10^{-17}$  &  $3.09\cdot 10^{-16}$  
				 &  $2.36\cdot 10^{-17}$  &  $7.40\cdot 10^{-18}$  \\
$6$   &  $4.662575$  &  $2.60\cdot 10^{-17}$  &  $5.19\cdot 10^{-17}$  
				 &  $2.73\cdot 10^{-17}$  &  $9.46\cdot 10^{-17}$  \\
$7$   &  $5.699271$  &  $4.59\cdot 10^{-17}$  &  $5.61\cdot 10^{-16}$  
				 &  $4.15\cdot 10^{-17}$  &  $6.26\cdot 10^{-17}$  \\
$8$   &  $5.725937$  &  $4.70\cdot 10^{-17}$  &  $7.27\cdot 10^{-17}$  
				 &  $5.67\cdot 10^{-17}$  &  $1.16\cdot 10^{-16}$  \\
$9$   &  $6.465175$  &  $8.05\cdot 10^{-17}$  &  $1.08\cdot 10^{-16}$  
				 &  $6.01\cdot 10^{-16}$  &  $1.14\cdot 10^{-16}$  \\
$10$  &  $6.598173$  &  $1.34\cdot 10^{-15}$  &  $9.94\cdot 10^{-16}$  
				  &  $6.85\cdot 10^{-15}$  &  $3.46\cdot 10^{-16}$ \\
$11$  &  $7.285975$  &  $4.39\cdot 10^{-15}$  &  $3.17\cdot 10^{-15}$  
				  &  $5.24\cdot 10^{-15}$  &  $9.52\cdot 10^{-16}$ \\		
$12$  &  $7.626265$  &  $2.58\cdot 10^{-14}$  &  $6.71\cdot 10^{-15}$  
				  &  $2.72\cdot 10^{-14}$  &  $3.44\cdot 10^{-15}$ \\	  
$13$  &  $7.880296$  &  $1.21\cdot 10^{-12}$  &  $3.01\cdot 10^{-14}$  
				  &  $1.26\cdot 10^{-12}$  &  $2.98\cdot 10^{-14}$ \\
\hline
\end{tabular}
\end{table}

We observed a significant difference in 
the numbers of the non-zero entries of the 
triangular factor $L$ in the \verb|ldl| factorizations of
the augmented matrix $A_{\sigma}$ and the submatrix $S_{11}^{\sigma}$.  
For example, with the shift $\sigma = -4.0$, 
the number of non-zero entries of $L$ 
from the submatrix $S_{11}^{\sigma}$ is 
$15,142,866$, which is $65.5\%$ less than 
the number of non-zero entries of $L$ 
from the augmented matrix $A_{\sigma}$, 
which is $43,940,581$, see Figure~\ref{fig:fill-in}. 
Similar results are also observed with the shift $\sigma = 4.0$. 
Hence we strongly advocate the use of Method 2 for 
computing the matrix-vector product $u = Cv$. 

\begin{figure}[t]
\centering
\includegraphics[width=0.35\textwidth,trim=0mm 0mm 0mm 0mm,clip=true]
{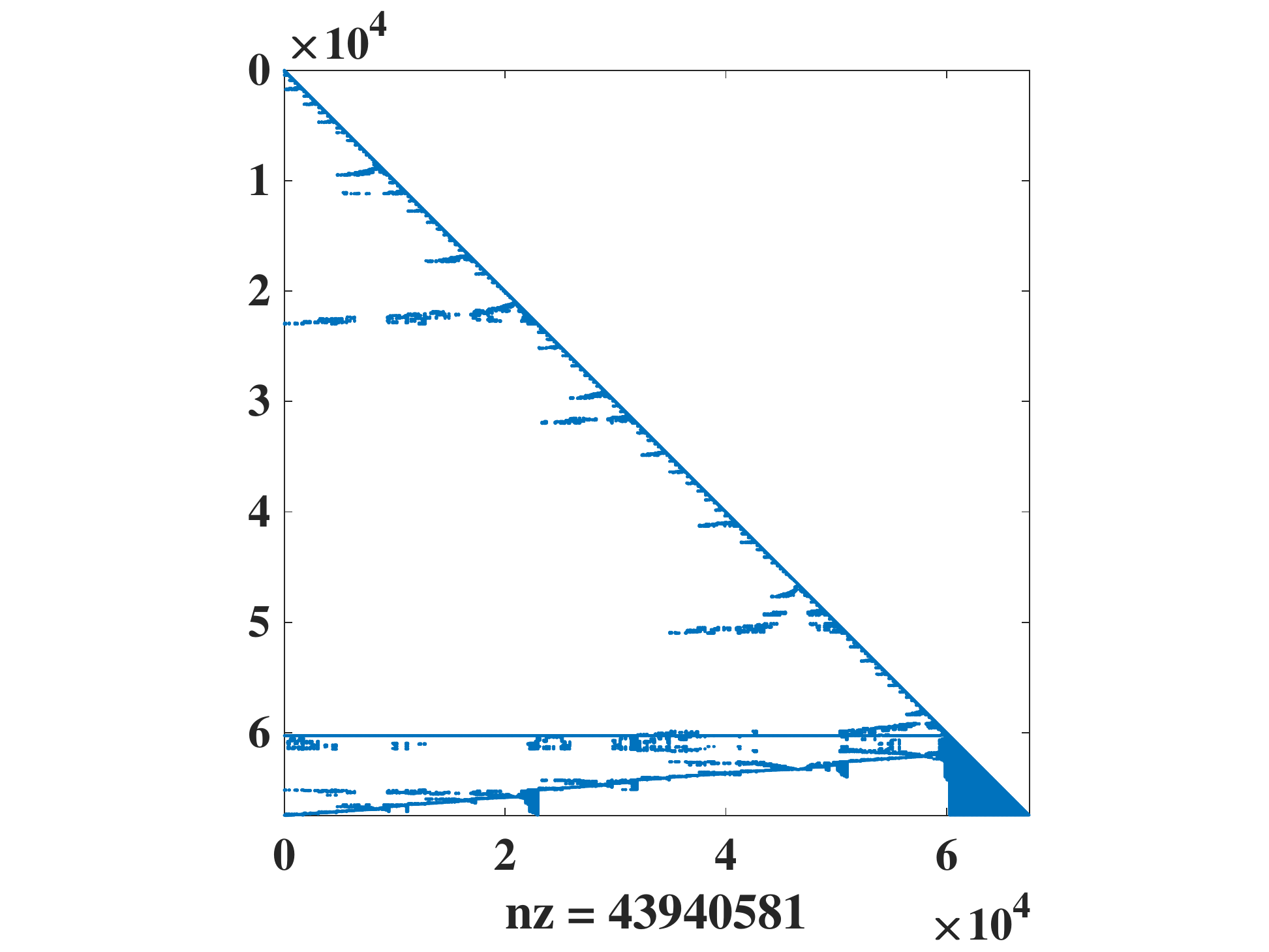}
\includegraphics[width=0.35\textwidth,trim=0mm 0mm 0mm 0mm,clip=true]
{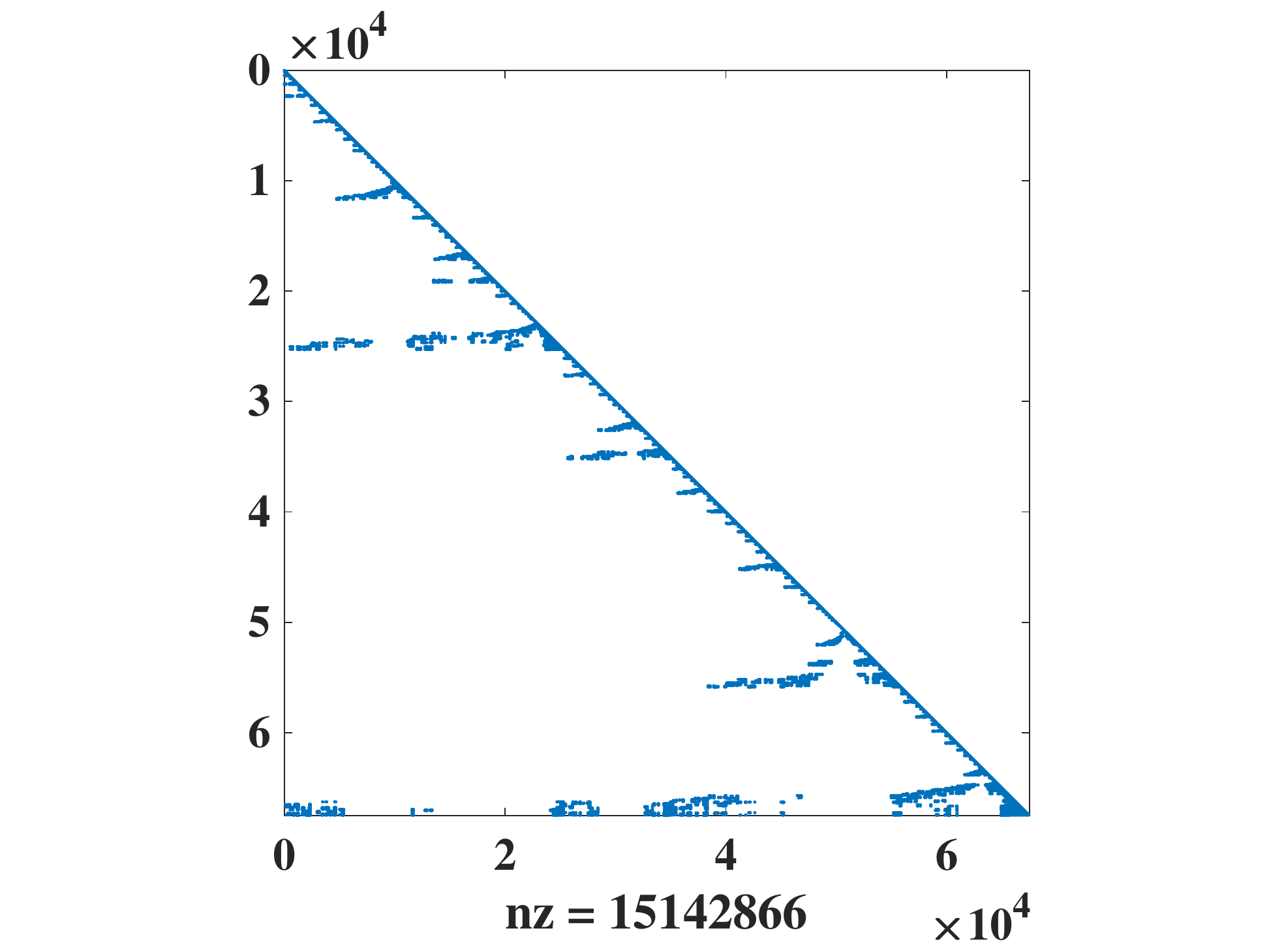}
\caption{The sparsity pattern of the matrices $L$ 
from the augmented matrix $A_{\sigma}$ (left) 
and the submatrix $S_{11}^{\sigma}$ (right).
The shift is $\sigma=-4.0$ and the pivot tolerance is $\tau=0.1$.}
\label{fig:fill-in}
\end{figure}

}
\end{example} 


\section{Concluding Remark}  \label{sec:conclusion}
We studied the buckling eigenvalue problem  of 
singular pencil, and addressed the two open issues 
associated with the shift-invert Lanczos method.
We found that the proposed scheme for counting the 
number of eigenvalues is a reliable tool for 
the validation.


\section*{Acknowledgments}
This work was supported in part by the U.S. National Science Foundation 
under Award DMS-1913364.

\appendix

\section{Canonical form of a symmetric semi-definite pencil $A - \lam B$} 
\label{appendix:can}

In this section, we give a constructive derivation of 
a canonical form of a symmetric semi-definite pencil $A - \lam B$,
namely $A$ is symmetric and $B$ is symmetric semi-positive definite.

\begin{theorem} \label{thm:canform}
For a symmetric semi-definite pencil $A-\lam B$, 
there exists a non-singular matrix $W\in \R^{n\times n}$ such that
\begin{equation} \label{eq:canonicalform}
W^T A W =  \kbordermatrix{
		& 2n_0 	& n_1       		& n_2		& n_3   \\
2n_0 	& S   	&                   	&          		&	\\
n_1          	&      		& \Lam_1       	&          		&	\\
n_2	 	&		&                   	& \Lam_2 	&	\\
n_3 		&  		&                   	&                     	& 0}
\quad \mbox{and} \quad
W^T B W =  \kbordermatrix{
		& 2n_0 		& n_1       	& n_2		& n_3   \\
2n_0         & \Omega  	&             	&    			&	\\
n_1          	&                   	& I_{n_1}  	&          		&	\\
n_2 		&                   	&             	& 0  			&	\\
n_3		&			&		& 			& 0},
\end{equation}
where
\begin{equation*}
S \equiv I_{n_0}\otimes\twobytwo{0}{1}{1}{0},
\quad
\Omega \equiv I_{n_0}\otimes\twobytwo{1}{0}{0}{0},
\end{equation*}
$\Lam_1$ and $\Lam_2$ are diagonal matrices with real diagonal entries, 
and $\Lam_2$ is non-singular. 
Moreover, we have 
\begin{align*}
n_0   & = \dim(\mc{N}(B)) - n_2 - n_3,  \\
n_1	& = \rank(B) - n_0,  \\
n_2 	& = \rank(\mc{P}_{\mc{N}(B)}A\mc{P}_{\mc{N}(B)}),  \\
n_3	& = \dim(\mc{N}(A)\cap\mc{N}(B)), 
\end{align*}
where $\mc{P}_{\mc{N}(B)}$ is the orthogonal projection onto $\mc{N}(B)$.
\end{theorem}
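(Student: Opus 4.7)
The plan is to build the canonical form by a sequence of congruences that peel off the structural pieces of the pencil one at a time, each congruence preserving what was normalized earlier.

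First I would diagonalize $B$. Since $B$ is symmetric positive semi-definite of rank $r$, a spectral decomposition followed by diagonal scaling yields a congruence $W_1$ with $W_1^T B W_1 = \diag(I_r, 0)$. In this basis $A$ partitions as $\bigl[\begin{smallmatrix} A_{11} & A_{12} \\ A_{12}^T & A_{22} \end{smallmatrix}\bigr]$, and $A_{22}$ represents $A|_{\mc{N}(B)}$; its rank equals $\rank(\mc{P}_{\mc{N}(B)} A \mc{P}_{\mc{N}(B)}) = n_2$ by invariance of rank under a change of basis. An orthogonal congruence acting only on the last $n - r$ coordinates diagonalizes $A_{22}$ into $\diag(\Lambda_2, 0)$ with $\Lambda_2$ nonsingular of size $n_2$, while leaving $B$ intact. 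A block-elimination congruence whose off-diagonal shift involves $\Lambda_2^{-1}$ and is placed in rows sitting over the zero block of $B$ then decouples the $\Lambda_2$ block from the $\mc{R}(B)$ block without disturbing $B$.

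Next I would isolate the common nullspace. The remaining coupling between $\mc{R}(B)$ and the zero portion of $A|_{\mc{N}(B)}$, call it $B_2$, has an SVD $B_2 = U_1 \Sigma_s V^T$ of rank $s$; applied via the orthogonal congruence $\diag(U_1, I, V)$, which preserves $B$, it produces a clean diagonal coupling $\Sigma_s$ over a trailing block of $n_3$ coordinates on which both $A$ and $B$ vanish. These are the common-nullspace coordinates, so $n_3 = \dim(\mc{N}(A) \cap \mc{N}(B))$ by congruence invariance. What survives on the $\mc{R}(B)$ part are three symmetric self-coupling blocks $P_{11}, P_{12}, P_{22}$. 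I would eliminate $P_{11}$ and $P_{12}$ with further off-diagonal congruences whose shifts sit between the $\Sigma_s$ rows and the $\mc{R}(B)$ rows, again straddling the zero block of $B$; the induced Sylvester-type equations, of the form $T\Sigma_s + \Sigma_s T^T = -P_{11}$, are uniquely solvable since $\Sigma_s$ is symmetric positive definite. Orthogonal diagonalization of the surviving block $P_{22}$ of size $r - s$ produces $\Lambda_1$, giving $n_1 = r - s$.

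Finally I would interleave and rescale. A permutation pairs the first $s$ coordinates with the $\Sigma_s$ rows into $s$ blocks of size two, each carrying $\bigl[\begin{smallmatrix} 0 & \sigma_i \\ \sigma_i & 0 \end{smallmatrix}\bigr]$ for $A$ and $\bigl[\begin{smallmatrix} 1 & 0 \\ 0 & 0 \end{smallmatrix}\bigr]$ for $B$; the diagonal scaling $\diag(1, 1/\sigma_i)$ on each pair rescales $\sigma_i$ to $1$ without touching $B$, yielding the $(S, \Omega)$ block with $n_0 = s$. The dimensional identities then follow from $n_2 + s + n_3 = \dim \mc{N}(B)$ and $n_0 + n_1 = \rank(B)$. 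The main obstacle I anticipate is the elimination phase: the off-diagonal congruences must be positioned precisely so that their nonzero entries lie across the zero block of $B$, and verifying $V^T B V = B$ by block computation together with the solvability of the associated Sylvester equations are the two non-trivial ingredients on which the rest of the argument rests.
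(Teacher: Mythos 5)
Your proof is correct and reaches the canonical form by essentially the same mechanism as the paper: a chain of congruences that successively diagonalize blocks, eliminate off-diagonal couplings across the zero block of $B$, and then permute and rescale. The genuine difference is that the paper invokes the Fix--Heiberger reduction (Lemma~\ref{lemma:FixHeiberger}) as a black box, which already delivers a form in which the $\Sigma$-coupled block, the $\Lambda_2$ block, and the common-null block are separated; the remaining work in the paper is just the eliminations $W_1,W_2$, the diagonalization and rescaling $W_4$, and the two permutations. You instead re-derive the Fix--Heiberger structure from scratch — diagonalize $B$, orthogonally diagonalize $A|_{\mc{N}(B)}$, decouple $\Lambda_2$, then SVD the residual coupling $B_2$ to expose $\Sigma_s$ and the trailing $n_3$ coordinates — before doing the remaining eliminations and the final interleave. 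This makes your argument self-contained, at the cost of a longer construction. Your identification of $n_2=\rank(A_{22})=\rank(\mc{P}_{\mc{N}(B)}A\mc{P}_{\mc{N}(B)})$ is also more direct than the paper's QR-plus-Sylvester argument at the end. One small correction: the Lyapunov-type equation $T\Sigma_s+\Sigma_s T^T=-P_{11}$ is solvable but not uniquely solvable (its kernel contains all $T$ with $\Sigma_s T$ antisymmetric); for the construction you only need existence, and the explicit choice $T=-\tfrac12\Sigma_s^{-1}P_{11}$ — which is exactly the paper's $W_1$ — suffices, so this does not affect the argument.
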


We first introduce the following lemma due to Fix 
and Heiberger \cite{Fix1972}, also see \cite[Sec. 15.5]{Parlett1998}.

\begin{lemma}  \label{lemma:FixHeiberger}
For the symmetric semi-definite pencil $A - \lam B$,
there exists a non-singular matrix $W\in \R^{n\times n}$ such that
\begin{align*}  
W^TAW = 
\kbordermatrix{
		&  n_0		&  n_1		&  n_2		&  n_0	&  n_3		\\
n_0		&  A_{00}		&  A_{01}		&  A_{02}		&  \Sigma	&  0		\\
n_1		&  A_{01}^T	&  A_{11}		&  A_{12}		&		&		\\
n_2		&  A_{02}^T	&  A_{12}^T	&  \Lam_2		&		&		\\
n_0		&  \Sigma		&			&			&  0		&		\\
n_3		&  0			&			&			&		&  0		
}
\quad\mbox{and}\quad
W^TBW = 
\kbordermatrix{
		&  n_0		&  n_1		&  n_2		&  n_0	&  n_3		\\
n_0		&  I_{n_0}		&			&			&		&		\\
n_1		&			&  I_{n_1}		&			&		&		\\
n_2		&			&			&  0			&		&		\\
n_0		&			&			&			&  0		&  		\\
n_3		&			&			&			&		&  0	
},
\end{align*}
where $\Lam_2$ and $\Sigma$ are non-singular, diagonal matrices 
with real diagonal entries.
\end{lemma}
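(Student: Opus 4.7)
The plan is to construct $W$ as a product $W = W_1 W_2 W_3$ of three congruence transformations, each reducing one feature of the pencil while preserving what was achieved earlier. For $W_1$, since $B$ is symmetric positive semi-definite, the spectral theorem gives an orthogonal $Q$ with $Q^T B Q = \diag(D, 0)$, where $D$ is positive diagonal of size $r = \rank(B)$. Taking $W_1 = Q\,\diag(D^{-1/2}, I_{n-r})$ yields $W_1^T B W_1 = \diag(I_r, 0)$ and $W_1^T A W_1 = \bigl(\begin{smallmatrix} A' & C' \\ (C')^T & D' \end{smallmatrix}\bigr)$, partitioned as $(r, n-r)$. In these coordinates $D'$ represents $\mc{P}_{\mc{N}(B)} A \mc{P}_{\mc{N}(B)}$, so $n_2 := \rank(D')$ matches the stated formula for $n_2$.

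For $W_2$, apply the spectral theorem to $D'$ to obtain an orthogonal $U$ with $U^T D' U = \diag(\Lam_2, 0)$, where $\Lam_2$ is real diagonal and non-singular of size $n_2$; set $W_2 = \diag(I_r, U)$. This leaves $B$ intact (the action is on the zero block) and reshapes the lower-right $(n-r) \times (n-r)$ block of $A$ into $\diag(\Lam_2, 0_{n-r-n_2})$. Let $C_2 \in \R^{r \times (n-r-n_2)}$ denote the resulting coupling between the leading $r$ coordinates and the trailing $(n-r-n_2)$ coordinates. For $W_3$, compute the SVD $C_2 = U_1 \bigl(\begin{smallmatrix} \Sigma & 0 \\ 0 & 0 \end{smallmatrix}\bigr) V_1^T$ with $\Sigma$ positive diagonal of size $n_0 := \rank(C_2)$, and set $W_3 = \diag(U_1, I_{n_2}, V_1)$. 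Both the $I_r$ block and the surrounding zero block of $B$ are invariant under this congruence, while $C_2$ becomes $\bigl(\begin{smallmatrix} \Sigma & 0 \\ 0 & 0 \end{smallmatrix}\bigr)$. Splitting the first $r$ rows/columns into blocks of sizes $n_0$ and $n_1 := r - n_0 = \rank(B) - n_0$, and the last $n-r-n_2$ into blocks of sizes $n_0$ and $n_3 := (n-r-n_2) - n_0$, produces exactly the form \eqref{eq:canonicalform}; the generic blocks $A_{00}, A_{01}, A_{02}, A_{11}, A_{12}$ arise as the corresponding pieces of $U_1^T A' U_1$ and $U_1^T C_1$, where $C_1$ was the coupling to the $\Lam_2$ block. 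The size identity then forces $n_0 = \dim\mc{N}(B) - n_2 - n_3$, and reading the canonical form shows $\mc{N}(A) \cap \mc{N}(B)$ equals the span of the last $n_3$ coordinate directions, giving $n_3 = \dim(\mc{N}(A) \cap \mc{N}(B))$.

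The main obstacle is bookkeeping in the third step: the left factor $U_1$ simultaneously mixes the top-left block $A'$ into $\bigl(\begin{smallmatrix} A_{00} & A_{01} \\ A_{01}^T & A_{11} \end{smallmatrix}\bigr)$, reshapes the coupling to the $\Lam_2$ block into $\bigl(\begin{smallmatrix} A_{02} \\ A_{12} \end{smallmatrix}\bigr)$, and produces the $\Sigma/0$ form of $C_2$. I need to confirm that the first two effects impose no structural constraint on $U_1$ (no structure is claimed for those generic blocks), so that the SVD freedom required by the third effect is genuinely available. The verification that $\dim(\mc{N}(A) \cap \mc{N}(B)) = n_3$ then reduces to observing that a vector in $\mc{N}(B)$ is supported on the last three blocks and that annihilation by $A$ forces vanishing on both the $\Lam_2$ block (by diagonal non-singularity of $\Lam_2$) and the $\Sigma$-coupled block (by non-singularity of $\Sigma$), leaving only the size-$n_3$ trailing block.
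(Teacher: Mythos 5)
Your construction is correct. Note, though, that the paper does not prove this lemma at all: it is quoted from Fix and Heiberger, with a pointer to Parlett's book (Sec.~15.5), and is then used as the starting point for the proof of the canonical form in Appendix~A. What you have supplied is therefore a self-contained proof rather than a variant of the paper's argument, and it is essentially the classical Fix--Heiberger reduction done in exact arithmetic: (1) whiten $B$ by an orthogonal diagonalization plus scaling, so $B\mapsto\diag(I_r,0)$ with $r=\rank(B)$; (2) orthogonally diagonalize the $(2,2)$ block $D'$ of the transformed $A$ (which is $A$ compressed onto $\mc{N}(B)$), splitting off the non-singular $\Lam_2$; (3) take an SVD of the remaining coupling block $C_2$ between the range-of-$B$ coordinates and the null coordinates not touched by $\Lam_2$, which simultaneously produces the $\Sigma$ block, the zero blocks in rows $n_1$ and $n_3$, and leaves $\diag(I_r,0,0)$ invariant because $U_1$ and $V_1$ are orthogonal. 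The worry you flag about $U_1$ is indeed vacuous: the blocks $A_{00},A_{01},A_{02},A_{11},A_{12}$ carry no claimed structure, so redefining them as pieces of $U_1^TA'U_1$ and $U_1^TC_1$ costs nothing, and the sizes $n_1=r-n_0$, $n_3=(n-r-n_2)-n_0$ are automatically nonnegative since $n_0=\rank(C_2)$ is bounded by both dimensions of $C_2$. The dimension identifications you append ($n_2=\rank(\mc{P}_{\mc{N}(B)}A\mc{P}_{\mc{N}(B)})$, $n_3=\dim(\mc{N}(A)\cap\mc{N}(B))$, $n_0=\dim\mc{N}(B)-n_2-n_3$) are not part of the lemma as stated --- they appear in the paper's Theorem~\ref{thm:canform} --- but your verification of them is sound, so their inclusion is harmless. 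Compared with simply citing the literature, your route buys a short, verifiable derivation using only the spectral theorem and the SVD; compared with the paper's subsequent machinery, it stops exactly where the lemma stops, which is appropriate.
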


\noindent {\em Proof of Theorem \ref{thm:canform}.} 
By Lemma \ref{lemma:FixHeiberger},
there exists a non-singular matrix $W_0\in \R^{n\times n}$ such that
\begin{align*}  
A^{(1)} \equiv W_0^TAW_0 = 
\kbordermatrix{
		&  n_0		&  n_1		&  n_2		&  n_0	&  n_3		\\
n_0		&  A_{00}		&  A_{01}		&  A_{02}		&  \Sigma	&  0		\\
n_1		&  A_{01}^T	&  A_{11}		&  A_{12}		&		&		\\
n_2		&  A_{02}^T	&  A_{12}^T	&  \Lam_2		&		&		\\
n_0		&  \Sigma		&			&			&  0		&		\\
n_3		&  0			&			&			&		&  0		
}
\quad\mbox{and}\quad
B^{(1)} \equiv W_0^TBW_0 = 
\kbordermatrix{
		&  n_0		&  n_1		&  n_2		&  n_0	&  n_3		\\
n_0		&  I_{n_0}		&			&			&		&		\\
n_1		&			&  I_{n_1}		&			&		&		\\
n_2		&			&			&  0			&		&		\\
n_0		&			&			&			&  0		&  		\\
n_3		&			&			&			&		&  0	
},
\end{align*}
where $\Lam_2$ and $\Sigma$ are non-singular, diagonal matrices
with real diagonal entries.

Let
\begin{align*}
W_1 \equiv 
\kbordermatrix{
		&  n_0		&  n_1		&  n_2		&  n_0	&  n_3	\\
n_0		&  I_{n_0}		&			&			&		&		\\
n_1		&			&  I_{n_1}		&			&		&		\\
n_2		&			&			&  I_{n_2}		&		&		\\
n_0	&  -\Sigma^{-1}A_{00}/2  &  -\Sigma^{-1}A_{01}  &  -\Sigma^{-1}A_{02}   &  I_{n_0}  &  \\
n_3		&			&			&   			&		&  I_{n_3}		
},
\end{align*}
then
\begin{align*}
A^{(2)}\equiv W_1^TA^{(1)}W_1 = 
\kbordermatrix{
		&  n_0		&  n_1		&  n_2		&  n_0	&  n_3	\\
n_0		&  0			&  			&  			&  \Sigma	&  		\\
n_1		&  			&  A_{11}		&  A_{12}		&		&		\\
n_2		&  			&  A_{12}^T	&  \Lam_2		&		&		\\
n_0		&  \Sigma		&			&			&  0		&		\\
n_3		&  			&			&			&		&  0		
}
\quad\mbox{and}\quad
B^{(2)}\equiv W_1^TB^{(1)}W_1 = 
\kbordermatrix{
		&  n_0		&  n_1		&  n_2		&  n_0	&  n_3	\\
n_0		&  I_{n_0}		&			&			&		&		\\
n_1		&			&  I_{n_1}		&			&		&		\\
n_2		&			&			&  0			&		&		\\
n_0		&			&			&			&  0		&  		\\
n_3		&			&			&			&		&  0	
}.
\end{align*}
Next let
\begin{align*}  
W_2 \equiv 
\kbordermatrix{
		&  n_0		&  n_1		&  n_2		&  n_0	&  n_3	\\
n_0		&  I_{n_0}		&			&			&		&		\\
n_1		&			&  I_{n_1}		&			&		&		\\
n_2		&&  -\Lam_{2}^{-1}A_{12}^T	&  I_{n_2}		&		&		\\
n_0		&			&   			&  			&  I_{n_0}	&		\\
n_3		&			&			&   			&		&  I_{n_3}		
},
\end{align*}
then
\begin{align*}
A^{(3)}\equiv W_2^TA^{(2)}W_2 = 
\kbordermatrix{
		&  n_0		&  n_1		&  n_2		&  n_0	&  n_3	\\
n_0		&  0			&  			&  			&  \Sigma	&  		\\
n_1		&  			&  C_{11}		&  			&		&		\\
n_2		&  			&  			&  \Lam_2	&		&		\\
n_0		&  \Sigma		&			&			&  0		&		\\
n_3		&  			&			&			&		&  0		
}
\quad\mbox{and}\quad
B^{(3)}\equiv W_2^TB^{(2)}W_2 = 
\kbordermatrix{
		&  n_0		&  n_1		&  n_2		&  n_0	&  n_3	\\
n_0		&  I_{n_0}		&			&			&		&		\\
n_1		&			&  I_{n_1}		&			&		&		\\
n_2		&			&			&  0			&		&		\\
n_0		&			&			&			&  0		&  		\\
n_3		&			&			&			&		&  0	
},
\end{align*}
where $C_{11}\in\R^{n_1 \times n_1}$ is symmetric and
$C_{11} = A_{11} - A_{12}\Lam_{2}^{-1}A_{12}^T$.

Define the permutation matrix 
\begin{align*}
P_3 \equiv
\left[ \begin{array}{ccccc}
I_{n_0} & 0 & 0 & 0 & 0  \\
0 & 0 & I_{n_1} & 0 & 0  \\
0 & 0 & 0   & I_{n_2} & 0  \\
0 & I_{n_0} & 0 & 0 & 0  \\
0 & 0 & 0 & 0 & I_{n_3}
\end{array} \right],
\end{align*}
then
\begin{align*}
A^{(4)}\equiv P_3^TA^{(3)}P_3 = 
\kbordermatrix{
		&  n_0		&  n_0		&  n_1		&  n_2		&  n_3	\\
n_0		&  			&  \Sigma		&  			&  			&  		\\
n_0		&  \Sigma		&  			&  			&			&		\\
n_1		&  			&  			&  C_{11}		&			&		\\
n_2		&  			&			&			&  \Lam_2		&		\\
n_3		&  			&			&			&			&  0		
}
\quad\mbox{and}\quad
B^{(4)}\equiv P_3^TB^{(3)}P_3 = 
\kbordermatrix{
		&  n_0		&  n_0		&  n_1		&  n_2		&  n_3	\\
n_0		&  I_{n_0}		&			&			&			&		\\
n_0		&			&  0			&			&			&		\\
n_1		&			&			&  I_{n_1}		&			&		\\
n_2		&			&			&			&  0			&  		\\
n_3		&			&			&			&			&  0	
}.
\end{align*}

\medskip
Since $C_{11}\in\R^{n_1 \times n_1}$ is symmetric, 
it admits the eigen-decomposition
\begin{align*}
C_{11} = Q_1\Lam_1Q_1^T,
\end{align*}
where $Q_1\in\R^{n_1 \times n_1}$ is an orthogonal matrix and 
$\Lam_1\in\R^{n_1 \times n_1}$ is a diagonal matrix.
Applying the congruent transformation associated with 
$W_4\equiv\diag(I_{n_0},\Sigma^{-1},Q_1,I_{n_2},I_{n_3})$, we have
\begin{align*}
A^{(5)}\equiv W_4^TA^{(4)}W_4 = 
\kbordermatrix{
		&  n_0		&  n_0		&  n_1		&  n_2		&  n_3	\\
n_0		&  			&  I_{n_0}		&  			&  			&  		\\
n_0		& I_{n_0}		&  			&  			&			&		\\
n_1		&  			&  			&  \Lam_1		&			&		\\
n_2		&  			&			&			&  \Lam_2		&		\\
n_3		&  			&			&			&			&  0		
}
\quad\mbox{and}\quad
B^{(5)}\equiv W_4^TB^{(4)}W_4 = 
\kbordermatrix{
		&  n_0		&  n_0		&  n_1		&  n_2		&  n_3	\\
n_0		&  I_{n_0}		&			&			&			&		\\
n_0		&			&  0			&			&			&		\\
n_1		&			&			&  I_{n_1}		&			&		\\
n_2		&			&			&			&  0			&  		\\
n_3		&			&			&			&			&  0	
}.
\end{align*}

\medskip
Last, define the permutation matrix $P_5\equiv\diag(E, I_{n_1}, I_{n_2}, I_{n_3})$ 
with $E\equiv [e_1\ e_{n_0+1}\ e_2 \ldots\ e_{2n_0}]$
and we have the canonical form in \eqref{eq:canonicalform}
\begin{align*}
A^{(6)}\equiv P_5^TA^{(5)}P_5 = 
\kbordermatrix{
		&  2n_0		&  n_1		&  n_2		&  n_3	\\
2n_0		&  S			&  			&  			&  		\\
n_1		&  			&  \Lam_1		&			&		\\
n_2		&			&			&  \Lam_2		&		\\
n_3		&			&			&			&  0		
}
\quad\mbox{and}\quad
B^{(6)}\equiv P_5^TB^{(5)}P_5 = 
\kbordermatrix{
		&  2n_0		&  n_1		&  n_2		&  n_3	\\
2n_0		&  \Omega	&			&			&		\\
n_1		&			&  I_{n_1}		&			&		\\
n_2		&			&			&  0			& 	 	\\
n_3		&			&			&			&  0	
},
\end{align*}
where
\begin{equation*}
S \equiv I_{n_0}\otimes\twobytwo{0}{1}{1}{0}
\quad\mbox{and}\quad
\Omega \equiv I_{n_0}\otimes\twobytwo{1}{0}{0}{0}.
\end{equation*}
The canonical form~\eqref{eq:canonicalform} is obtained with 
$W\equiv W_0W_1W_2P_3W_4P_5$. 

Now we interpret the dimension of each block matrix.
From the canonical form of $B$ in Eq.~\eqref{eq:canonicalform}, 
we can infer that $n_0 = \dim(\mc{N}(B)) - n_2 - n_3$ and 
$n_1 = \rank(B) - n_0$. Also, $n_3 = \dim(\mc{N}(A)\cap\mc{N}(B))$.
To interpret $n_2$, 
let $Z\in\R^{n\times(n_0+n_2+n_3)}$ be the basis of $\mc{N}(B)$ 
consisting of the columns of $W$ and consider the QR decomposition of $Z=QR$. 
Since $Q$ is an orthonormal basis of $\mc{N}(B)$,
$\rank(\mc{P}_{\mc{N}(B)}A\mc{P}_{\mc{N}(B)})=\rank(Q^TAQ)$.
By the Sylvester's law,
$\rank(Q^TAQ)=\rank(Z^TAZ)$.
But, from the canonical form~\eqref{eq:canonicalform},
$Z^TAZ=\diag(0_{n_0},\Lam_2,0_{n_3})$ and $\rank(Z^TAZ)=n_2$.
Therefore, $n_2=\rank(\mc{P}_{\mc{N}(B)}A\mc{P}_{\mc{N}(B)})$.
\hfill $\Box$ 

\begin{corollary}  \label{cor:sd} 
The symmetric semi-definite pencil $A-\lam B$ is 
simultaneously diagonalizable if and only if $n_0=0$. 
In this case, we have the canonical form
\begin{equation*} 
W^T A W =  \kbordermatrix{
		& n_1       		& n_2			&  n_3	\\
n_1		& \Lam_1		&          			&		\\	
n_2		&			&  \Lam_2			&  		\\
n_3		&			&				&  0
}
\quad \mbox{and} \quad
W^T B W =  \kbordermatrix{
		& n_1       		& n_2			&  n_3	\\
n_1		& I_{n_1}		&          			&		\\	
n_2		&			&  0				&  		\\
n_3		&			&				&  0
},
\end{equation*}
\end{corollary}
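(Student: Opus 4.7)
The plan is to invoke the canonical form in Theorem \ref{thm:canform}. For sufficiency, if $n_0 = 0$, then the $2n_0 \times 2n_0$ blocks $S$ and $\Omega$ in \eqref{eq:canonicalform} vanish, and what remains of $W^T A W$ and $W^T B W$ is already diagonal, which is precisely simultaneous congruence diagonalization.

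For necessity, I would argue by contradiction: assume the pencil $A - \lam B$ is simultaneously diagonalizable and yet $n_0 > 0$, and derive a contradiction through an eigenvector count. If $V$ is non-singular with $V^T A V = D_A$ and $V^T B V = D_B$ both diagonal, then from $A v_i = (D_A)_{ii}\, V^{-T} e_i$ and $B v_i = (D_B)_{ii}\, V^{-T} e_i$ each column $v_i$ of $V$ is a generalized eigenvector of the pencil---for a finite eigenvalue when $(D_B)_{ii} \neq 0$, for the eigenvalue at infinity when $(D_B)_{ii} = 0 \neq (D_A)_{ii}$, and an element of $\mc{N}(A) \cap \mc{N}(B)$ when both vanish. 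Hence the pencil must admit $n$ linearly independent generalized eigenvectors. Since this count is invariant under congruence and additive over block-diagonal pencils, I may compute it directly from the canonical pencil \eqref{eq:canonicalform}.

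I would then tally the contribution of each block. Each $2 \times 2$ constituent of $S - \lam \Omega$, namely $\bigl(\begin{smallmatrix}0 & 1 \\ 1 & 0\end{smallmatrix}\bigr) - \lam \bigl(\begin{smallmatrix}1 & 0 \\ 0 & 0\end{smallmatrix}\bigr)$, has determinant $-1$ for every $\lam$ and so admits no finite eigenvector; its infinite eigenspace is the one-dimensional $\Span(e_2)$; and its common nullspace is trivial. Thus each such $2 \times 2$ piece contributes only one linearly independent generalized eigenvector, for a total of $n_0$ from the full $S - \lam \Omega$ block. The blocks $\Lam_1 - \lam I_{n_1}$, $\Lam_2 - \lam \cdot 0$, and $0 - \lam \cdot 0$ are already diagonal and contribute $n_1$, $n_2$, and $n_3$ eigenvectors respectively. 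Summing gives $n_0 + n_1 + n_2 + n_3 = n - n_0$, strictly less than $n$ whenever $n_0 > 0$---the desired contradiction.

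The main obstacle is the careful bookkeeping around generalized eigenvectors: one must verify that the maximum number of linearly independent generalized eigenvectors of a symmetric pencil is a congruence invariant and is additive over block-diagonal pencils. Both facts follow once one observes that any generalized eigenvector of a block-diagonal pencil, for any fixed eigenvalue (finite, infinite, or of common-nullspace type), decomposes into generalized eigenvectors of the individual blocks for the same eigenvalue. After that observation, the block-by-block count itself is immediate.
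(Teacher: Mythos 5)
Your proof is correct and rests on the same core observation as the paper: the $(S,\Omega)$ block supplies only $n_0$ linearly independent (infinite) eigenvectors out of its $2n_0$ dimensions, so simultaneous diagonalization fails whenever $n_0>0$, while for $n_0=0$ the canonical form is already diagonal. The paper compresses the necessity direction into a one-line comparison of algebraic versus geometric multiplicity of the infinite eigenvalue ($2n_0+n_2$ vs.\ $n_0+n_2$); your direct count of linearly independent generalized eigenvectors is a more self-contained unpacking of exactly that comparison.
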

\begin{proof}
From the pairs $(S,\Omega)$ and $(\Lam_2,0)$ in Eq.~\eqref{eq:canonicalform}, 
we note that the algebraic and geometric multiplicity of the infinite eigenvalues 
are $2n_0+n_2$ and $n_0+n_2$, respectively. 
Therefore, the symmetric semi-definite pencil $A-\lam B$ 
is simultaneously diagonalizable if and only if $n_0 = 0$. 
\end{proof}


\bibliographystyle{abbrv}
\bibliography{refs}%

\end{document}